\documentclass[a4paper]{amsart}
\usepackage{amsmath,amssymb, amsfonts}
\usepackage{graphicx}
\usepackage{tikz}
\theoremstyle{plain}
 \newtheorem{thm}{Theorem}[section]
 \newtheorem{theorem}{Theorem}[section]      
 
 \newtheorem{lemma}[thm]{Lemma}              
 \newtheorem{proposition}[thm]{Proposition}  
\theoremstyle{definition}
 \newtheorem{definition}[thm]{Definition}    
 \newtheorem{exmp}[thm]{Example}

\theoremstyle{remark}
 \newtheorem{remark}[thm]{Remark}            

\usepackage{graphicx, color}
\usepackage{tikz}
\numberwithin{equation}{section}
\DeclareMathOperator{\mc}{mc}
\DeclareMathOperator{\codim}{codim}

\begin{document}
\title{Stable hyperplane arrangements}

\author{Toshio Oshima}
\email{oshima@ms.u-tokyo.ac.jp}

\subjclass[2020]{Primary 52C35}
\keywords{hyperplane arrangement, middle convolution, Pfaffian system}

\maketitle
\begin{quote}
{\small%
{\bf Abstract.} \ 
We classify complex hyperplane arrangements $\mathcal A$ whose intersection posets $L(\mathcal A)$ satisfy $L(\mathcal A)=\pi_i^{-1}\circ\pi_i\bigl(L(\mathcal A)\bigr)$ for $i=1,\dots,n$. 
Here $\pi_i$ denotes the projection from $\mathbb C^n$ onto $\mathbb C^{n-1}$ defined by that forgets the coordinate $x_i$ of $(x_1,\dots,x_n)\in\mathbb C^n$, and $\pi_i\bigl(L(\mathcal A)\bigr)=\{\pi_i(S)\mid S\in L(\mathcal A)\}$.
We show that such arrangements $\mathcal A$ arise as pullbacks of the mirror hyperplanes of complex reflection groups of type $A$ or $B$.
}
\end{quote}
\section{Introduction}
Let $\mathcal A$ be a hyperplane arrangement in $V=\mathbb C^n$.
That is, $\mathcal A$ is a finite union of hyperplanes :
\[
 A\ni H :=\{f_H(x)=0 \mid x\in V\}\text{ where each }f_H(x)\text{ is a polynomial of degree 1.}
\]
If $0\in H$ for every $H\in \mathcal A$, we say that $\mathcal A$ is {\em homogeneous}.	
\begin{definition}  We denote by $L(\mathcal A)$, or simply by $\mathcal L$, the set or poset
of affine subspaces of $V$ obtained as intersections of hyperplanes in $\mathcal A$

\vspace{2.8cm}
\hspace{9,45cm}\scalebox{0.7}{\begin{tikzpicture}
\draw
(-1.5,0)--(1.5,0) (0,-1.5)--(0,1.5)
(-0.3,-1.5)--(1.5,1.5)  (-1.5,0.7)--(1.5,-1.5)
;
\node at (0.6,0) {$\bullet$};
\node at (0,-0.4) {$\bullet$};
\node at (0,0) {$\bullet$};
\node at (-0.55,0) {$\bullet$};
\node at (0.25,-0.59) {$\bullet$};
\end{tikzpicture}}

\vspace{-5.3cm}
\begin{align*}
\mathcal L=L(\mathcal A)&:=\Bigl\{\bigcap\limits_{H\in\mathcal B}H\ne\emptyset\mid \mathcal B\subset\mathcal A\bigr\}
\end{align*}
and put
\begin{align*}
\mathcal L^{(k)}=L(\mathcal A)^{(k)}&:=\{S\in L(\mathcal A)\mid \codim S=k\},\quad
\mathcal L^{(0)}=\{V\},\quad \mathcal L^{(1)}=\mathcal A.
\intertext{For affine subspaces $S,\,S'\in\mathcal L$, we define}
\mathcal L^{(k)}_{\subset S}&=L^{(k)}_{\subset S}(\mathcal A):=\{T\in\mathcal L^{(k)}\mid T\subset S\},
\allowdisplaybreaks\\
\mathcal L^{(k)}_{\supset S}&=L^{(k)}_{\supset S}(\mathcal A):=\{T\in\mathcal L^{(k)}\mid T\supset S\},
\allowdisplaybreaks\\
\mathcal A_S&:=\mathcal L^{(1)}_{\supset S}=\{H\in\mathcal A\mid H\supset S\}
.
\intertext{For a non-zero vector $v\in V$ and $S,\,S'\in\mathcal L$, we define}
\langle v,S\rangle&:=\{tv+y\mid t\in\mathbb C,\ y\in S\},
\allowdisplaybreaks\\
\mathcal A_v&:=\{H\in\mathcal A \mid \langle v,H\rangle=V\},
\allowdisplaybreaks\\
\mathcal A_v^c&:=\mathcal A\setminus\mathcal A_v,
\allowdisplaybreaks\\
\mc_v\mathcal A&:=\mathcal A\cup
\{\langle v,S\rangle\mid\codim \langle v,S\rangle=1,\  S\in L(\mathcal A)^{(2)}\}.\hspace{5cm}
\end{align*}

\vspace{-2.5cm}
\hspace{9.3cm}\scalebox{0.8}{\begin{tikzpicture}
\draw
(0,-1)--(0,1.5)
(-0.5,1.5)--(1.5,-0.5)
(1,-1)--(1,1.5);
\draw[very thick,->] (-1.5,0)--(1.5,0);
\node at (1.8,0) {$x_1$};
\draw[dotted, thick] (-1.5,1)--(1.5,1);
\node at (0,1) {$\bullet$};
\node at (0,0) {$\bullet$};
\node at (1,0) {$\bullet$};
\node at (-0.8,0.6) {$v \to$};
\node at (-1.2,-0.3) {$H_1$};
\node at (1.75,-0.7) {$H_4$};
\node at (0,-1.2) {$H_2$};
\node at (1,-1.2) {$H_3$};
\node at (0.2,1.23) {$S$};
\end{tikzpicture}}

We call $\mc_v\mathcal A$ the {\em convolution} of $\mathcal A$ by $v$.
\end{definition}

\begin{remark}\label{rm:funvec}
{\rm (i)} \ 
When we fix a coordinate $x=(x_1,\dots,x_n)$ on $\mathbb C^n$, the $i$-th standard basis
$e_i:=(0,\dots,0,\overset{\overset{\raisebox{-1mm}{\scriptsize i}}\smallsmile}{1},0,\dots,0)$
will occasionally be denoted by $x_i$ for brevity.

\smallskip
{\rm (ii)}\ 
Let $S=H\cap H'\in\mathcal L^{(2)}$ with $H,\,H'\in\mathcal A$.
Suppose
\[
  H=\{x\in\mathbb C^n\mid c_1x_1+\cdots+c_nx_n+c=0\}\text{ and }
  H'=\{x\in\mathbb C^n\mid c_1'x_1+\cdots+c_n'x_n+c'=0\}.
\]
Then $H\in\mathcal A_{x_i}$ if and only if $c_i\ne0$.

If $H,\,H'\in\mathcal A_{x_i}^c$, then $\langle x_i,S\rangle=S$.

If $H\in\mathcal A_{x_i}$ and $H'\in\mathcal A_{x_i}^c$, then $\langle x_i,S\rangle=H'$.

If $H,\,H'\in\mathcal A_{x_i}$, then 
\[\langle x_i,S\rangle=\{x\in\mathbb C^n\mid c_i'(c_1x_1+\cdots+c_nx_n+c)=c_i(c_1'x_1+\cdots+c_n'x_n+c')\}.\]
\end{remark}
\begin{definition}
A non-zero vector 
$v\in V$ is said to be parallel to $S\in L(\mathcal A)$, or equivalently that 
$S$ is called {\em $v$-closed} if and only if $\langle v,S\rangle = S$.
If  $\langle v,S\rangle \ne S$, $v$ is transversal to $S$. 

$\mathcal A$ is called $v$-closed if and only if 
$\langle v,S\rangle\in L(\mathcal A)$ for every $S\in L(\mathcal A)^{(2)}$.

If there exist $n$ linearly independent vectors $v_1,\dots,v_n
\in \mathbb C^n$ such that $\mathcal A$ is $v_i$-closed for each $i$, 
we say that $\mathcal A$ is {\em stable}.
Equivalently, in the coordinate system defined by $\{v_i\}$
we have
\[\mc_{x_i}\mathcal A=\mathcal A\qquad(i=1,\dots,n).\]
\end{definition}

\begin{exmp} 
{\rm (i)}\ The braid arrangement 
\[\mathcal A=\bigcup_{1\le i<j\le n}
  \{(x_1,\dots,x_n)\in\mathbb C^n\mid x_i=x_j\}\]
is a stable hyperplane arrangement, which corresponds to mirror hyperplanes 
of the reflection group of type $A_{n-1}$.

\smallskip
{\rm (ii)}\ 
The arrangement of mirror hyperplanes of the reflection group of type $D_n$
\[
 \mathcal A=\{x_i=\pm x_j\mid 1\le i<j\le n\}\quad(n\ge4)
\]
is not stable, which is contained in the stable hyperplane arrangement of 
type $B_n$
\[
  \tilde{\mathcal A}=\mathcal A\cup \{x_i=0\mid 1\le i\le n\}.
\]

\smallskip
{\rm (iii)}\ 
Under the coordinate system $(x,y)$ of $\mathbb C^2$,

$\mathcal A=\bigl\{\{x=y\},\ \{x+y=1\},\ \{x+y=2\}\bigr\}\subset \mathbb C^2$,

$\mathcal L^{(2)}=\bigl\{(\tfrac12,\tfrac12),\ (1,1)\bigr\}$,

$\mc_x\mathcal A=\mathcal A\cup\bigl\{\{y=\tfrac12\},\ \{y=1\}\bigr\}$.

There exists no stable arrangement $\tilde{\mathcal A}$ satisfying 
$\widetilde {\mathcal A}\supset \mathcal A$.

\vspace{-2.3cm}
\hspace{8.4cm}
\scalebox{0.8}{
\begin{tikzpicture}
\draw [thick]
(-0.5,-0.5)--(1.5,1.5)
(-0.5,1.5)--(1.5,-0.5)
(0.5,1.5)--(2.5,-0.5)
;
\draw [->]
(-0.5,0)--(2.5,0);
\draw
(0,-0.5)--(0,1.5)
;
\draw [densely dotted]
(-0.5,1)--(2.5,1)
(-0.5,0.5)--(2.5,0.5)
;
\node at (0.5,0.5) {$\bullet$};
\node at (1,1) {$\bullet$};
\node at (1.5,0.5) {$\circ$};
\node at (0,1) {$\circ$};
\node at (-0.5,-0.7) {$x=y$};
\node at (1.4,-0.7) {$x{+}y{=}1$};
\node at (2.6,-0.7) {$x{+}y{=}2$};
\node at (2.75,0) {$x$};
\node at (0.15,-0.2) {$0$};
\node at (0.95,-0.2) {$1$};
\node at (1.95,-0.2) {$2$};
\end{tikzpicture}}
\end{exmp}

\medskip
The purpose of this paper is to give a classification of stable hyperplane arrangements.
Moreover, in \S\ref{sec:closed}, we determine the vectors $v$ 
for which a stable hyperplane arrangement $\mathcal A$ is $v$-closed. 
We note that the definition of stable arrangements shares features with that 
of fibre-type arrangements 
(cf.~\cite{ComplexArrangements}).

Lastly in this introduction, we explain the motivation for this paper.

A Pfaffian system with logarithmic singularities along hypersurface arrangement $\mathcal A$
is given by 
\begin{equation}\label{def:Pf}
  {\mathcal M}: d u = \Omega u,\ \Omega=\sum_{H\in\mathcal A}A_Hd\log f_H\quad(\Omega\wedge\Omega=0)
\end{equation}
where $A_H$ are constant square matrices  of size $N$ and $u$ is a vector of $N$ unknown functions
Each $A_H$ is called the {\em residue matrix} of ${\mathcal M}$  along $H$, 
and the condition $\Omega\wedge \Omega=0$ is the integrability condition of ${\mathcal M}$.
Then the convolution of $\mathcal M$ with respect to the variable $x_i$ and a parameter 
$\mu\in\mathbb C$ produces a new Pfaffian system
\begin{equation}\label{eq:mcPf}
  \widehat{\mathcal M}={\widehat \mc}_{x_i,\mu}{\mathcal M}: d \hat u = \hat \Omega\hat u,\ \hat\Omega
  =\sum_{H\in\mc_{x_i}\!\mathcal A}\hat A_Hd\log f_H.
\end{equation}
The middle convolution $\mc_{x_i,\mu}{\mathcal M}$ of $\mathcal M$
is defined (see \cite{Ha}) as an irreducible quotient of $\widehat{\mathcal M}$, 
and the corresponding transformation of solutions is realized 
by a Riemann-Liouville integral.
Here the middle convolution of $\mathcal M$ generalizes the operation
introduced for ordinary differential equations in 
\cite{katz1996rigid, DR}.
An {\em addition} of $\mathcal M$ is the transformation induced by the gauge change
$u\mapsto \bigl(\prod_{H\in\mathcal A}f_H^{\lambda_H}\bigr)u$
with parameters $\lambda_H\in\mathbb C$. 
A wide class of systems can be generated from a given system by successive additions and 
middle convolutions.
For example, any Fuchsian ordinary differential equation  without an accessory parameter
can be obtained from the trivial equation $u'=0$ (cf.~\cite{katz1996rigid}).
In particular, applying convolution to an addition of the trivial system (with $A_H=0$),
for generic parameters $\lambda_H$, $v$, and $\mu$, yields an irreducible Pfaffian system whose
singular locus equals $\mc_v\mathcal A$.
If $\mathcal A$ is stable, these transformations may be analyzed while keeping the singular 
locus fixed.
When $\mathcal A$ is the braid arrangement, the system is of $KZ$-type and the corresponding
transformations were studied by \cite{Ost}.
The non-stable case will be treated in \cite{Ohp}.
\section{Classification of stable hyperplane arrangements}

We first examine the property of being ``$v$-closed,'' defined in the previous section, 
which will be used in \cite{Ohp}.

\begin{lemma}\label{lem:arH}
{\rm (i)}\;
Let $S\in\mathcal L$ and $T,\,T'\in\mathcal L^{(\codim S+1)}_{\subset S}$ with $T\ne T'$. 
Then $\mathcal A_T\cap \mathcal A_{T'}=\mathcal A_S$.

{\rm (ii)}\;
The arrangement $\mc_v\mathcal A$ is $v$-closed.

{\rm (iii)}\;
For $S\in\mathcal L$, the subspace $S$ is $v$-closed if and only if
$\mathcal A_S\subset\mathcal A_v^c$.

{\rm (iv)}\;
Let $H\in \mathcal A_v$ and $S\in\mathcal L$. 
If $S$ is $v$-closed, then $H\cap S\ne\emptyset$, and
\begin{equation}
 \mathcal A_{H\cap S}\cap \mathcal A_v^c=\mathcal A_S.
\end{equation}

{\rm (v)}\;
Assume that $\mathcal A$ is $v$-closed and that $S\in\mathcal L$ is not $v$-closed.
Let $H\in\mathcal A_S\cap \mathcal A_v^c$. Then
\begin{equation}
  S = H \cap \langle v,S\rangle 
  \quad\text{and}\quad
  \langle v,S\rangle = \bigcap_{H'\in\mathcal A_S\cap \mathcal A_v^c} H'.
\end{equation}
In particular, the poset $\mathcal L$ is $v$-closed.
\end{lemma}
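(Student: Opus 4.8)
The plan rests on two elementary facts that I would record first. For every $S\in\mathcal L$ one has $S=\bigcap_{H\in\mathcal A_S}H$, hence $\vec S=\bigcap_{H\in\mathcal A_S}\vec H$ for the direction spaces; and a hyperplane lies in $\mathcal A_v^c$ exactly when $v\in\vec H$. With these, (iii) is immediate: $S$ is $v$-closed iff $v\in\vec S=\bigcap_{H\in\mathcal A_S}\vec H$, i.e.\ iff $v\in\vec H$ for every $H\supset S$, i.e.\ $\mathcal A_S\subset\mathcal A_v^c$. For (i), the two flats $T,T'$ are distinct subspaces of codimension one inside $S$, so their affine hull is all of $S$; thus any $H$ containing both contains $S$, giving $\mathcal A_T\cap\mathcal A_{T'}\subset\mathcal A_S$, while the reverse inclusion is trivial.

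For (ii) the key observation is that every hyperplane newly adjoined in $\mc_v\mathcal A$ is of the form $\langle v,S\rangle$ and is therefore $v$-closed (the map $S\mapsto\langle v,S\rangle$ is idempotent); equivalently, every transversal member of $\mc_v\mathcal A$ already belongs to $\mathcal A$. It suffices to treat a codimension-two flat $S'\in L(\mc_v\mathcal A)$ that is not $v$-closed, since otherwise $\langle v,S'\rangle=S'$. Then some $H\supset S'$ is transversal, and I would write $S'=H\cap H'$ with $H\in\mathcal A$ transversal and $H'\in\mc_v\mathcal A$. If $H'$ is transversal then $H'\in\mathcal A$ and $S'\in L(\mathcal A)^{(2)}$, so $\langle v,S'\rangle\in\mc_v\mathcal A$ by definition; if $H'$ is parallel, Remark~\ref{rm:funvec}(ii) gives $\langle v,S'\rangle=H'\in\mc_v\mathcal A$. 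Part (iv) is a direction computation: if $v\in\vec S$, a line in direction $v$ through a point of $S$ lies in $S$ and meets the transversal $H$, so $H\cap S\ne\emptyset$; moreover $\vec{H\cap S}=\vec H\cap\vec S$ has codimension one in $\vec S$ and omits $v$, whence $\langle v,H\cap S\rangle$ has direction $(\vec H\cap\vec S)+\mathbb C v=\vec S$ and equals $S$. The inclusion $\mathcal A_S\subset\mathcal A_{H\cap S}\cap\mathcal A_v^c$ follows from (iii), and conversely any parallel $H'\supset H\cap S$ is $v$-closed, hence contains $\langle v,H\cap S\rangle=S$, i.e.\ $H'\in\mathcal A_S$.

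For (v) I would first note that the relation $S=H\cap\langle v,S\rangle$ is the statement that a transversal hyperplane cuts $\langle v,S\rangle$ back down to $S$, so the relevant $H$ is one with $H\in\mathcal A_S\cap\mathcal A_v$: since $\vec S\subset\vec H$ and $v\notin\vec H$, one gets $\vec H\cap(\vec S+\mathbb C v)=\vec S$, forcing $H\cap\langle v,S\rangle=S$ by equality of codimensions. Such $H$ exists because $S$ is not $v$-closed. The substance of the lemma is the second identity, equivalently that $\langle v,S\rangle\in\mathcal L$. I would prove it by a reduction converting transversal defining hyperplanes into parallel ones. Writing $S=H_1\cap\cdots\cap H_k$ with $H_i\in\mathcal A_S$, if two factors are transversal, say $H_1,H_2$, then $K:=\langle v,H_1\cap H_2\rangle$ is a hyperplane of $\mathcal A$ because $\mathcal A$ is $v$-closed, it is parallel to $v$, and the codimension-two instance of the first identity gives $H_2\cap K=H_1\cap H_2$, so $S=K\cap H_2\cap\cdots\cap H_k$ now has one fewer transversal factor.

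Iterating until a single transversal factor $H$ remains yields $S=R\cap H$, where $R$ is the intersection of parallel hyperplanes of $\mathcal A$ that contain $S$. Then $R$ is $v$-closed and $\codim R=k-1$ (from $R\supsetneq S=R\cap H$), so $R\supset\langle v,S\rangle$ of the same codimension forces $R=\langle v,S\rangle$; since the factors of $R$ lie in $\mathcal A_S\cap\mathcal A_v^c$ we obtain $\langle v,S\rangle=R=\bigcap_{H'\in\mathcal A_S\cap\mathcal A_v^c}H'\in\mathcal L$, and the ``in particular'' follows at once, the $v$-closed case being trivial. The main obstacle is exactly this reduction: one must check that replacing a transversal pair by $K$ preserves the intersection $S$ and strictly decreases the number of transversal factors, and that the process cannot terminate with zero transversal factors (which would make $S$ itself $v$-closed); everything else is bookkeeping with direction spaces and codimensions.
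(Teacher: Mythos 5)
Your proof is correct and follows essentially the same route as the paper's: (iii) by reducing $v$-closedness of $S$ to $v$-closedness of the hyperplanes through it, (ii) by observing that every newly adjoined member of $\mc_v\mathcal A$ is of the form $\langle v,S\rangle$ and hence parallel to $v$, and (v) by using $v$-closedness of $\mathcal A$ to convert transversal defining hyperplanes of $S$ into parallel members of $\mathcal A$ — you do this iteratively on pairs of transversal factors, while the paper does it in one pass, replacing each $H_j$ by $\langle v,H\cap H_j\rangle$ for a single fixed transversal $H$; the mechanism and the cutting identity $T=H\cap\langle v,T\rangle$ are the same. Your local variants in (i) (affine-hull argument instead of the paper's codimension-forcing argument) and in (iv) (direct argument via $\langle v,H\cap S\rangle=S$ instead of the paper's proof by contradiction) are equally valid. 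One point worth recording: you correctly read the hypothesis of (v) as $H\in\mathcal A_S\cap\mathcal A_v$ (transversal); as literally printed, $H\in\mathcal A_S\cap\mathcal A_v^c$ would make the first identity false, since a parallel hyperplane containing $S$ contains all of $\langle v,S\rangle$, and the paper's own proof likewise treats $H$ as transversal.
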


\begin{proof}
Let $T,\,T'\in\mathcal L^{(\codim S+1)}_{\subset S}$. 
If there exists $H\in (\mathcal A_T\cap \mathcal A_{T'})\setminus\mathcal A_S$,
then $T,\,T'\subset H\cap S \subsetneqq S$, which implies $T=T'=H\cap S$. 
This proves (i).

Let $H,\,H'\in \mc_v\, \mathcal A$ with $\codim\langle v,H\cap H'\rangle =1$.
If $H\notin\mathcal A$, then $\langle v,H\cap H'\rangle=H$, which proves (ii).

Let $H\in\mathcal A$. 
For any $y\in H$, we have $v+y\in H$ if and only if $H\in\mathcal A_v^c$.
Hence $S\in\mathcal L$ is $v$-closed if and only if $\mathcal A_S\subset\mathcal A_v^c$,
which establishes (iii).

Suppose that $H\in\mathcal A_v$ and $S\in\mathcal L$ is $v$-closed.
Since $\langle v,H\rangle=\mathbb C^n$ and $\langle v,S\rangle=S$, 
it follows that $H\cap S\ne\emptyset$. 
Let $H'\in(\mathcal A_{H\cap S}\cap \mathcal A_v^c)\setminus \mathcal A_S$. 
Then $H'\cap S=H\cap S$ and $\mathcal A_{H'\cap S}\subset \mathcal A_v^c$, 
which contradicts the assumption that $H\in\mathcal A_{H\cap S}$. 
This completes the proof of (iv).

Under the assumption of (v), let $S\in\mathcal L^{(k)}$ with $k\ge2$, and suppose
$S=H\cap H_2\cap\cdots\cap H_k$.
Set $H'_j:=\langle v,H\cap H_j\rangle\in\mathcal A_v^c$. Then
$H\cap H_j=H\cap H'_j$ for $j=2,\dots,k$.  
Hence $S=H\cap H'_2\cap\cdots\cap H'_k$ and 
$\langle v,S\rangle=H'_2\cap\cdots\cap H'_k$, which proves (v). 
\end{proof}

\begin{remark}
Under the projection
\[
  \pi_i: \mathbb C^n\to\mathbb C^{n-1},\quad 
  (x_1,\dots,x_{i-1},x_i,x_{i+1},\dots,x_n)\mapsto 
  (x_1,\dots,x_{i-1},x_{i+1},\dots,x_{n}),
\]
we have 
\[
  L(\mc_{x_i}\mathcal A)
  = L(\mathcal A)\cup\bigcup_{S\in L(\mathcal A)}\pi_i^{-1}\bigl(\pi_i(S)\bigr).
\]
Hence $\mathcal A$ is stable if and only if 
\[
  L(\mathcal A)=\pi_i^{-1}\bigl(\pi_i(L(\mathcal A))\bigr)
  \quad\text{for all }i=1,\dots,n.
\]
\end{remark}

We now study the condition under which $\mathcal A$ is $x_i$-closed. 
Throughout this section, a {\em coordinate transformation} of $(x_1,\dots,x_n)$ 
will usually mean a transformation of the form
\begin{align}\label{eq:Arcoord}
 x_j\mapsto a_jx_{\sigma(j)}+b_j \quad (j=1,\dots,n),
\end{align}
where $a_j,\,b_j\in\mathbb C$, $a_j\ne0$, and $\sigma$ is a permutation of the indices.
Such transformations preserve stability.

\begin{lemma}
Suppose that $\mathcal A$ is stable. 
Then there exists a decomposition of the set of indices
\begin{equation}\label{eq:idxdec}
  \{1,\ldots,n\}=\bigsqcup_{j=1}^m I_j
\end{equation}
such that 
\[
\mathcal A_{x_i}\cap \mathcal A_{x_{i'}}\ne \emptyset
\quad\text{if and only if there exists } I_j \text{ with } i,\,i'\in I_j.
\]
\end{lemma}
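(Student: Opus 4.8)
The plan is to show that the relation $i\sim i'$ defined by $\mathcal A_{x_i}\cap\mathcal A_{x_{i'}}\ne\emptyset$ (with $i\sim i$ adjoined) is an equivalence relation on $\{1,\dots,n\}$, and to take the blocks $I_j$ of \eqref{eq:idxdec} to be its classes. Symmetry is immediate from $\mathcal A_{x_i}\cap\mathcal A_{x_{i'}}=\mathcal A_{x_{i'}}\cap\mathcal A_{x_i}$, and reflexivity holds once we discard any coordinate $x_i$ occurring in no hyperplane (such an index forms a harmless singleton block), since then $\mathcal A_{x_i}\ne\emptyset$. Everything therefore reduces to \emph{transitivity}, which is where stability must enter and which I expect to be the crux.

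For transitivity, let $i,i',i''$ be distinct with $H_1\in\mathcal A_{x_i}\cap\mathcal A_{x_{i'}}$ and $H_2\in\mathcal A_{x_{i'}}\cap\mathcal A_{x_{i''}}$; I must exhibit a hyperplane in $\mathcal A_{x_i}\cap\mathcal A_{x_{i''}}$. By Remark~\ref{rm:funvec}(ii), $H\in\mathcal A_{x_k}$ is equivalent to the coefficient of $x_k$ in $f_H$ being nonzero. If $H_1\in\mathcal A_{x_{i''}}$ or $H_2\in\mathcal A_{x_i}$ there is nothing to prove, so I may assume the coefficient of $x_{i''}$ in $f_{H_1}$ and the coefficient of $x_i$ in $f_{H_2}$ both vanish. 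Writing $f_{H_1}=\sum_k a_kx_k+a_0$ and $f_{H_2}=\sum_k b_kx_k+b_0$, this says $a_i,a_{i'},b_{i'},b_{i''}\ne0$ while $a_{i''}=b_i=0$.

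The key idea is to eliminate the shared variable $x_{i'}$ by convolution. Since $a_i\ne0=b_i$ the normal vectors of $H_1,H_2$ are not proportional, so $S:=H_1\cap H_2\in\mathcal L^{(2)}$ is nonempty of codimension $2$. As $H_1\in\mathcal A_S\cap\mathcal A_{x_{i'}}$, Lemma~\ref{lem:arH}(iii) shows $S$ is not $x_{i'}$-closed, whence $\langle x_{i'},S\rangle$ strictly contains $S$ and so has codimension $1$. Because $\mathcal A$ is stable it is $x_{i'}$-closed, so $\langle x_{i'},S\rangle\in L(\mathcal A)^{(1)}=\mathcal A$. By the last displayed formula of Remark~\ref{rm:funvec}(ii) (with $x_{i'}$ in the role of $x_i$) its defining form is $b_{i'}f_{H_1}-a_{i'}f_{H_2}$, whose coefficient of $x_i$ is $b_{i'}a_i-a_{i'}b_i=b_{i'}a_i\ne0$ and whose coefficient of $x_{i''}$ is $b_{i'}a_{i''}-a_{i'}b_{i''}=-a_{i'}b_{i''}\ne0$. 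Hence $\langle x_{i'},S\rangle\in\mathcal A_{x_i}\cap\mathcal A_{x_{i''}}$, giving $i\sim i''$.

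Finally, the equivalence classes $I_1,\dots,I_m$ furnish the decomposition: if $i,i'$ lie in the same class they are joined by a chain $i=j_0\sim j_1\sim\cdots\sim j_k=i'$, and iterating the transitivity just proved yields $\mathcal A_{x_i}\cap\mathcal A_{x_{i'}}\ne\emptyset$, while the converse is the definition of $\sim$. The main obstacle is precisely the transitivity step: recognizing that eliminating the common coordinate $x_{i'}$ through $\langle x_{i'},S\rangle$ produces a hyperplane which, thanks to stability, lies in $\mathcal A$ and simultaneously involves $x_i$ and $x_{i''}$. The degenerate sub-cases in which $H_1$ or $H_2$ already links the outer indices must be separated out first, so that the two coefficients in the elimination remain nonzero.
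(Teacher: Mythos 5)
Your proof is correct and takes essentially the same approach as the paper: both reduce the lemma to transitivity of the relation $\mathcal A_{x_i}\cap\mathcal A_{x_{i'}}\ne\emptyset$ and establish it by eliminating the shared coordinate $x_{i'}$ through $x_{i'}$-closedness, the paper by writing the two hyperplanes in solved form $x_{i'}=c_1x_i+h_1$, $x_{i'}=c_3x_{i''}+h_3$ and equating, you by computing the defining form $b_{i'}f_{H_1}-a_{i'}f_{H_2}$ of $\langle x_{i'},H_1\cap H_2\rangle$ via Remark~\ref{rm:funvec}(ii). Your preliminary case split (either $H_1$ or $H_2$ already links the outer indices, or else $a_{i''}=b_i=0$) is the same reduction the paper makes by assuming $\mathcal A_{x_1}\cap\mathcal A_{x_2}\cap\mathcal A_{x_3}=\emptyset$.
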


\begin{proof}
Suppose that 
$\mathcal A_{x_1}\cap \mathcal A_{x_2}\ne\emptyset$,  
$\mathcal A_{x_2}\cap \mathcal A_{x_3}\ne \emptyset$, 
and $\mathcal A_{x_1}\cap \mathcal A_{x_2}\cap \mathcal A_{x_3}=\emptyset$.
Then
\[
  \{x_2=c_1x_1+h_1(x_4,\dots,x_n)\},\quad 
  \{x_2=c_3x_3+h_3(x_4,\dots,x_n)\}\in\mathcal A
\]
for some non-zero constants $c_1,\,c_3$ and suitable functions $h_1$ and $h_3$.
Since $\mathcal A$ is $x_2$-closed, we have
\[
  \mathcal A_{x_1}\cap\mathcal A_{x_3}\ni 
  \{\,c_1x_1+h_1= c_3x_3+h_3\,\}\in\mathcal A.
\]
Thus, if $\mathcal A_{x_i}\cap \mathcal A_{x_j}\ne\emptyset$ and 
$\mathcal A_{x_j}\cap \mathcal A_{x_k}\ne\emptyset$, then 
$\mathcal A_{x_i}\cap \mathcal A_{x_k}\ne\emptyset$. 
This proves the lemma.
\end{proof}

According to the decomposition \eqref{eq:idxdec}, 
the arrangement $\mathcal A$ can be written as
\begin{equation}\label{eq:Ardec}
 \begin{split}
  \mathcal A &= \bigcup_{1\le j\le m}\,\bigcup_{H\in \mathcal A_j}\pi_j^{-1}(H),\\
  \pi_j &:\mathbb C^n\to\mathbb C^{\#I_j},\quad 
  (x_1,\dots,x_n)\mapsto (x_\nu)_{\nu\in I_j},
 \end{split}
\end{equation}
where each $\mathcal A_j$ is a stable hyperplane arrangement in $\mathbb C^{\#I_j}$.
Conversely, for a given decomposition \eqref{eq:idxdec}, 
the arrangement $\mathcal A$ defined by \eqref{eq:Ardec} 
is stable if and only if 
each $\mathcal A_j$ is a stable hyperplane arrangement in $\mathbb C^{\#I_j}$.

To classify stable arrangements, 
we may assume that $\mathcal A$ is {\em indecomposable}; namely,
\begin{align}
   \mathcal A_{x_i}\cap\mathcal A_{x_j}\ne\emptyset
   \qquad(1\le i<j\le n).
\end{align}
We will give representatives of such arrangements 
under suitable coordinate transformations of the form \eqref{eq:Arcoord}.

\medskip
Note that the following lemma is straightforward.

\begin{lemma}\label{lem:Arred}
Let $a,b\in\mathbb C$ with $a\ne0$, and define
\[
  \pi : \mathbb C^n\to\mathbb C^{n-1},\quad
     (x_1,\dots,x_{n-2},x_{n-1},x_n)\mapsto (x_1,\ldots,x_{n-2},a x_{n-1}+b x_n).
\]
Then, for a hyperplane arrangement $\mathcal A'$ in $\mathbb C^{n-1}$, 
the inverse image $\pi^{-1}(\mathcal A')$ 
is a stable hyperplane arrangement in $\mathbb C^n$ 
if and only if $\mathcal A'$ is stable.
\end{lemma}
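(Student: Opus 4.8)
The plan is to exploit the fact that $\pi$ is a surjective linear map with one-dimensional kernel, so that pulling back along $\pi$ gives an inclusion- and codimension-preserving bijection between intersection posets, under which the span operation $\langle v,\cdot\rangle$ transforms in a controlled way. The single ``extra'' closed direction needed to pass from $\mathbb C^{n-1}$ to $\mathbb C^n$ will be supplied by $\ker\pi$ itself, and the whole argument reduces to one transfer claim plus a dimension count.

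First I would record the poset correspondence. Writing $\mathcal A:=\pi^{-1}(\mathcal A')=\{\pi^{-1}(H')\mid H'\in\mathcal A'\}$, every intersection of members of $\mathcal A$ is the $\pi$-preimage of the corresponding intersection in $\mathcal A'$, so
\[
  L(\mathcal A)=\{\pi^{-1}(S')\mid S'\in L(\mathcal A')\},
\]
and since $\pi$ is surjective the assignment $S'\mapsto\pi^{-1}(S')$ is a bijection $L(\mathcal A')\to L(\mathcal A)$ preserving inclusion and codimension; in particular it matches $L(\mathcal A')^{(2)}$ with $L(\mathcal A)^{(2)}$. Next I would prove the elementary identity
\[
  \langle v,\pi^{-1}(S')\rangle=\pi^{-1}\bigl(\langle\pi(v),S'\rangle\bigr)
  \qquad(v\in\mathbb C^n,\ S'\subset\mathbb C^{n-1})
\]
by a direct two-sided inclusion: the image under $\pi$ of any $tv+y$ with $y\in\pi^{-1}(S')$ lies in $\langle\pi(v),S'\rangle$, and conversely if $\pi(x)=t\pi(v)+s'$ with $s'\in S'$ then $x-tv\in\pi^{-1}(S')$, so $x\in\langle v,\pi^{-1}(S')\rangle$. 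Combined with codimension-preservation this yields the key claim: for any $v$ with $\pi(v)\ne0$, the arrangement $\mathcal A$ is $v$-closed if and only if $\mathcal A'$ is $\pi(v)$-closed, because for each $S=\pi^{-1}(S')\in L(\mathcal A)^{(2)}$ one has $\langle v,S\rangle\in L(\mathcal A)$ exactly when $\langle\pi(v),S'\rangle\in L(\mathcal A')$ (the case where $\pi(v)$ is parallel to $S'$ being satisfied trivially on both sides). I would also note that every $v\in\ker\pi$ makes $\mathcal A$ trivially $v$-closed, since then $\langle v,S\rangle=S$ for all $S$.

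With the claim in hand both implications follow by a dimension count. If $\mathcal A'$ is stable, choose linearly independent $v'_1,\dots,v'_{n-1}$ witnessing this, lift them to $v_1,\dots,v_{n-1}\in\mathbb C^n$ with $\pi(v_j)=v'_j$, and set $v_n$ to be a generator of $\ker\pi$ (nonzero because $a\ne0$). The claim makes $\mathcal A$ both $v_j$-closed for $j\le n-1$ and $v_n$-closed, and $v_1,\dots,v_n$ are linearly independent since their images $v'_1,\dots,v'_{n-1}$ are independent while $v_n\in\ker\pi$; hence $\mathcal A$ is stable. Conversely, if $\mathcal A$ is stable with witnesses $v_1,\dots,v_n$, then $\pi(v_1),\dots,\pi(v_n)$ span $\mathbb C^{n-1}$, so some $n-1$ of them form a basis; these are nonzero and independent, and by the claim $\mathcal A'$ is closed along each, so $\mathcal A'$ is stable. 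The only point demanding care—and the mild obstacle—is the bookkeeping of this count: one must see that the unique closed direction ``lost'' under $\pi$ is precisely $\ker\pi$, which is always a closed direction for $\mathcal A$, so that stability is neither created nor destroyed by the pullback.
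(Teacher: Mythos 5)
Your proof is correct. Note that the paper offers no argument to compare against: it introduces this lemma with ``Note that the following lemma is straightforward'' and gives no proof, so your write-up simply supplies the omitted details. The three ingredients you isolate are exactly the right ones: the codimension-preserving bijection $S'\mapsto\pi^{-1}(S')$ between $L(\mathcal A')$ and $L(\mathcal A)$, the transfer identity $\langle v,\pi^{-1}(S')\rangle=\pi^{-1}\bigl(\langle\pi(v),S'\rangle\bigr)$, and the observation that $\ker\pi$ is automatically a closed direction for $\mathcal A$ (every $\pi$-preimage is saturated in that direction), which accounts for the discrepancy between needing $n$ independent closed directions upstairs and $n-1$ downstairs.
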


\begin{definition}
A hyperplane arrangement 
$\mathcal A$ in $\mathbb C^n$ is said to be {\em reducible} 
if $\mathcal A=\pi^{-1}(\mathcal A')$
for some arrangement $\mathcal A'$ in $\mathbb C^{n-1}$ as in Lemma~\ref{lem:Arred}, 
under a suitable coordinate system $(x_1,\dots,x_n)$. 
If $\mathcal A$ is not reducible, 
we say that $\mathcal A$ is {\em reduced}.
\end{definition}

It is clear that an arrangement $\mathcal A$ 
given by the decomposition \eqref{eq:Ardec}
is reduced if and only if 
each $\mathcal A_j$ is reduced.

\begin{remark}[Trivial case]
Assume that $\#\mathcal L^{(2)}=1$.  Let $\mathcal L^{(2)}=\{S\}$. 
Then it follows from the following Lemma~\ref{lem:trivial} that  
\[
\widehat{\mathcal A}
 :=\mc_{x_1}\mc_{x_2}\cdots\mc_{x_n}\mathcal A
  =\mathcal A\cup 
  \{\langle x_i,S\rangle \mid 
     \codim \langle x_i,S\rangle=1,\ \ i=1,\dots,n\}
\]
is stable, and moreover $L^{(2)}(\widehat{\mathcal A})=\{S\}$.
\end{remark}

\begin{lemma}\label{lem:trivial}
Suppose that $\mathcal L^{(2)}=\{S\}$.
Then $S\subset H$ for every $H\in\mc_v\mathcal A$ and every non-zero vector $v\in V$.
\end{lemma}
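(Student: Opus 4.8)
The plan is to reduce the statement to the single assertion that every hyperplane of $\mathcal A$ already contains $S$, and then read off the claim for the extra hyperplane produced by the convolution. Since $\mathcal L^{(2)}=\{S\}$, the definition of $\mc_v\mathcal A$ shows that the only hyperplane that can possibly be adjoined to $\mathcal A$ is $\langle v,S\rangle$, and even this is added only when $\codim\langle v,S\rangle=1$. Because $\langle v,S\rangle=\{tv+y\mid t\in\mathbb C,\ y\in S\}$ contains $S$ (take $t=0$), the new hyperplane automatically contains $S$. Hence it suffices to prove $S\subset H$ for every $H\in\mathcal A$.

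For that, first I would extract from the hypothesis two members of $\mathcal A$ whose intersection is exactly $S$. Since $\codim S=2>0$, the flat $S$ is cut out by some subfamily $\mathcal B\subset\mathcal A$ with $\bigcap_{H\in\mathcal B}H=S$, and a single hyperplane only yields codimension $1$; tracking the partial intersections of $\mathcal B$, one finds a step at which the codimension jumps from $1$ to $2$, i.e. two hyperplanes $H_1,H_2\in\mathcal A$ with $\codim(H_1\cap H_2)=2$. As $H_1\cap H_2\in\mathcal L^{(2)}=\{S\}$, this forces $H_1\cap H_2=S$; in particular $H_1,H_2$ are non-parallel, their normal covectors being linearly independent.

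Now take an arbitrary $H\in\mathcal A$. The key observation is that $H$ cannot be parallel to both $H_1$ and $H_2$: parallelism of affine hyperplanes means proportional normals, and this relation is transitive, so if $H$ were parallel to each of $H_1,H_2$ then $H_1$ and $H_2$ would be parallel, contradicting $H_1\cap H_2=S$. Choosing $H_i$ with $i\in\{1,2\}$ non-parallel to $H$, the two affine hyperplanes $H$ and $H_i$ have independent normals and therefore meet in a nonempty flat of codimension exactly $2$; by uniqueness this flat is $S$, so $S=H\cap H_i\subset H$. This disposes of every $H\in\mathcal A$ and, combined with the first paragraph, of every $H\in\mc_v\mathcal A$.

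The argument is essentially elementary, so there is no single hard step; the only points requiring care are the two affine-geometry facts invoked — that two affine hyperplanes with independent normals always intersect in a nonempty flat of codimension $2$, and that parallelism is transitive — together with the extraction of a genuinely codimension-$2$ pair $H_1,H_2$ from the family cutting out $S$. I would state these explicitly rather than leave them implicit, since the whole proof rests on converting ``$H$ meets some $H_i$ transversally'' into ``$S\subset H$'' via the uniqueness of the codimension-$2$ flat.
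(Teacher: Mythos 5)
Your proof is correct and follows essentially the same route as the paper: every $H\in\mathcal A$ meets another hyperplane of $\mathcal A$ transversally, so by uniqueness of the codimension-$2$ flat we get $S=H\cap H'\subset H$, while the only possible new hyperplane $\langle v,S\rangle$ contains $S$ by construction. The paper's one-line proof simply asserts the existence of such an $H'$; your parallelism/transitivity argument supplies exactly the justification it leaves implicit.
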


\begin{proof}
Let $H\in\mathcal A$. Since $\mathcal L^{(2)}\ne\emptyset$, 
there exists $H'\in\mathcal A$ with $H\cap H'\ne\emptyset$, and so
the assumption implies $S=H\cap H'$, and hence
$S\subset\langle v,S\rangle\cap H$.
\end{proof}

\begin{exmp}
The hyperplane arragements
\begin{align*}
 \mathcal A&=\{x_i=\pm1\mid i=1,\dots,4\}\cup\{x_i=x_{i+2}\mid i=1,\,2 \}\subset\mathbb C^4,\\
 \mathcal A'&=\{\{x_1+x_2+x_3=0\},\,\{x_1=\pm1\},\,\{x_2+x_3=\pm1\}\}\subset \mathbb C^3,\\
 \mathcal A''&=\{\{x_1+x_2+x_3=0\},\,\{x_1=x_2\},\,\{2x_1+x_3=0\},\,\{2x_2+x_3=0\}\}\subset \mathbb C^3
\end{align*}
are stable. But $\mathcal A$ is decomposable,  $\mathcal A'$ is reducible and
$\#L^{(2)}(\mathcal A'')=1$. 
\end{exmp}


We now state the main result of this note.

\begin{theorem}\label{thm:stableAr}
Let $\mathcal A$ be a stable, reduced, and indecomposable hyperplane arrangement in
$\mathbb C^n$.  
Assume that $\#\mathcal L^{(2)}>1$, namely, $\mathcal A$ is non-trivial.
Then, under a suitable coordinate system $(x_1,\ldots,x_n)$, 
there exist a positive integer $m$, a non-negative integer $r$, and non-zero complex numbers 
$\alpha_1,\dots,\alpha_r$ such that $\mathcal A$ has the following form.
Set
\begin{align*}
\Omega     &:=\bigl\{e^{\frac{2\pi k\sqrt{-1}}m}\mid k=1,\dots,m\bigr\},\\
\mathcal A_c&:=\bigl\{ \{x_i=\omega \alpha_j \}\mid 
 			\omega\in \Omega,\ i=1,\dots,n,\ j=1,\dots, r\bigr\},\\
\mathcal A_0&:=\bigl\{\{x_i=0\}\mid 1\le i\le n\bigr\}. 
\end{align*}
If $n=2$, then $r\ge1$ and 
\[
\mathcal A
 =\bigl\{\{x_1=\omega x_2\}\mid \omega\in \Omega'\bigr\}
   \cup\mathcal A_c\cup \mathcal A_0,
   \quad\text{where $1\in\Omega'\subset \Omega$.}
\]
If $n\ge3$, then 
\begin{align*}
\mathcal A&=\mathcal A'
   \quad\text{with $m=1$ and $n>3$, or }\quad
   \mathcal A=\mathcal A'\cup\mathcal A_c\cup \mathcal A_0,\\
\mathcal A'&:=\bigl\{\{x_i=\omega x_j\}\mid \omega\in \Omega ,\ 1\le i<j\le n\bigr\}. 
\end{align*}
\end{theorem}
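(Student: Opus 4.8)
The plan is to translate the stability hypothesis into a closure condition and then extract a finite group whose finiteness forces roots of unity. By Remark~\ref{rm:funvec}(ii), for $H,H'\in\mathcal A_{x_i}$ meeting in $S=H\cap H'$ the hyperplane $\langle x_i,S\rangle$ is exactly the result of eliminating $x_i$ between the defining equations of $H$ and $H'$; thus stability says precisely that $\mathcal A$ is closed under pairwise elimination of each coordinate, and that for each $S\in\mathcal L^{(2)}$ and each $i$ the hyperplane $\langle x_i,S\rangle$ lies in $\mathcal A$ unless $S$ is $x_i$-closed. Having already reduced, via the decomposition and reduction lemmas together with the trivial-case remark, to the reduced, indecomposable, non-trivial situation, I would first fix coordinates adapted to a distinguished flat: choose $S_0\in\mathcal L^{(2)}$ through which the most hyperplanes pass and translate so that $S_0$ contains the origin. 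This separates $\mathcal A$ into the hyperplanes through the origin (the homogeneous part, recording ``slopes'') and the genuinely affine ones, which will become $\mathcal A_c\cup\mathcal A_0$.

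The heart of the argument is the two-dimensional case, to which the general case is reduced by taking coordinate sections. For $n=2$, every $S\in\mathcal L^{(2)}$ is a point, no point is $x_i$-closed, and so each intersection point $(a,b)$ forces the lines $\{x_1=a\}$ and $\{x_2=b\}$ into $\mathcal A$. Writing the slanted lines as $\{x_1=\lambda x_2+\mu\}$ with $\lambda\ne0$, the intercept sets $P_1$ (vertical) and $P_2$ (horizontal) are finite, and intersecting a slanted line with the vertical and horizontal lines shows that the affine map $b\mapsto\lambda b+\mu$ carries $P_2$ into $P_1$ and its inverse carries $P_1$ into $P_2$. Since the finitely many slanted lines act by bijections between the finite sets $P_1$ and $P_2$, composing them generates a finite group of affine transformations of $\mathbb C$; but a finite-order affine map $z\mapsto\lambda z+\mu$ with $\lambda\ne1$ has $\lambda$ a root of unity. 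After recentring at the common fixed point I would conclude that the slopes through the origin form a cyclic group $\Omega$ of $m$-th roots of unity, that the axes $\mathcal A_0$ appear, and that the remaining affine lines are the $\Omega$-orbits $\{x_i=\omega\alpha_j\}$, i.e.\ $\mathcal A_c$. Non-triviality ($\#\mathcal L^{(2)}>1$, so the lines are not concurrent) forces $r\ge1$, reducedness excludes the all-parallel configuration, and rescaling a slanted line gives $1\in\Omega'\subset\Omega$.

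For $n\ge3$ I would propagate this picture across all coordinate pairs. By indecomposability every pair $(i,j)$ is linked, and eliminating a third variable between hyperplanes, again by Remark~\ref{rm:funvec}(ii), forces the slope group and the parameters $\{\alpha_j\}$ for one pair to agree with those for any adjacent pair; hence a single $m$ and a single $\{\alpha_1,\dots,\alpha_r\}$ govern the whole arrangement, giving $\mathcal A'=\{x_i=\omega x_j\}$ together with possibly $\mathcal A_c\cup\mathcal A_0$. The dichotomy then rests on a rigidity observation: if no axis or affine hyperplane is present, the slanted hyperplanes alone can be closed under elimination only when $m=1$, i.e.\ for the braid arrangement $\{x_i=x_j\}$, and this has $\#\mathcal L^{(2)}>1$ precisely when $n>3$ (for $n=3$ the three hyperplanes $x_i=x_j$ share a single codimension-two flat, the excluded trivial case); otherwise the axes and affine hyperplanes are present and one obtains the full type-$B$ form $\mathcal A'\cup\mathcal A_c\cup\mathcal A_0$.

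I expect the main obstacle to be the \emph{global coherence} step for $n\ge3$: showing that a single coordinate system simultaneously normalises every two-dimensional section, with one common root-of-unity group $\Omega$ and common shifts $\alpha_j$, rather than a priori different data on different sections. This consistency must be enforced by chasing eliminations through triples of coordinates and using indecomposability to transport the normalisation; controlling the affine intercepts so that they really line up as $\omega\alpha_j$ across all coordinates is the delicate part, as is the rigidity argument forcing $m=1$ in the pure braid case for $n\ge4$.
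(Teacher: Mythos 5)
Your $n=2$ argument is essentially sound and close in spirit to the paper's: each intersection point forces the two coordinate lines through it into $\mathcal A$, the slanted lines induce maps between the finite intercept sets, and finiteness forces root-of-unity slopes and a common centre. The paper packages the finiteness step differently, as a maximum-modulus argument for two maps $T_1(z)=\alpha_1z$, $T_2(z)=\alpha_2z+\alpha_3$ preserving a finite set (Lemma \ref{lem:Ar20}), rather than your finite-group-of-affine-maps argument; both work, but yours still needs the small supporting observations that the intercept set has at least two points (so that the restriction map to permutations of it is injective, whence finiteness of the group) and that a finite group of affine transformations of $\mathbb C$ has a common fixed point, which is what lets you translate the common point of the slanted lines to the origin.

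The genuine gap is in the case $n\ge3$. Your ``propagation across coordinate pairs'' presupposes that every hyperplane of $\mathcal A$ involves at most two of the coordinates, i.e.\ that $\mathcal A_{x_i}\cap\mathcal A_{x_j}\cap\mathcal A_{x_k}=\emptyset$ for distinct $i,j,k$. Nothing in your outline rules out a hyperplane such as $\{x_1+x_2+x_3=0\}$: such a hyperplane belongs to no ``pair,'' and its trace on a two-dimensional section $x_3=c$ is a perfectly legitimate line of slope $-1$, so the sectionwise $n=2$ classification does not detect it --- it only constrains how the sections vary with $c$, and extracting a contradiction from that variation is where the real work lies. This exclusion is exactly the paper's key Lemma \ref{lem:Ar3n}, and its proof (Lemmas \ref{lem:Ar3} and \ref{lem:Ar4}) is the technical heart of the theorem for $n\ge 3$: assuming some hyperplane involves three or more coordinates, one performs an explicit chain of eliminations to manufacture an infinite ladder of hyperplanes (e.g.\ $\{y+az+2^kw=h_k\}$ for all $k\ge0$), contradicting finiteness of $\mathcal A$, or else exhibits reducibility or triviality. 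Your closing paragraph identifies the delicate point as matching the data $(\Omega,\{\alpha_j\})$ across pairs, but that consistency is in fact the easy step --- once Lemma \ref{lem:Ar3n} is available, the paper disposes of it in a few lines (if $\{x_1=a_2x_2\},\{x_2=a_3x_3\}\in\mathcal A$ then $x_2$-closedness puts $\{x_1=a_2a_3x_3\}\in\mathcal A$, and so on). The missing, and hard, ingredient in your proposal is the exclusion of hyperplanes involving three or more coordinates.
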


\begin{remark}
{\rm (i)} 
When $n=2$ in Theorem~\ref{thm:stableAr} and $\Omega'=\{\omega_1,\dots,\omega_L\}$, we may assume that
\[
  \Omega=\{\omega_1^{k_1}\cdots\omega_L^{k_L}\mid k_1,\dots,k_L\in\mathbb Z_{\ge0}\}. 
\]

{\rm (ii)}
Examples of stable arrangements in $\mathbb C^2$ with coordinates $(x,y)$:

\noindent
\begin{tikzpicture}
\draw 
(-1.5,-1.5)--(1.5,1.5)
(-1.5,0) -- (1.5,0)
(0,-1.5)--(0,1.5)
(-0.7,-1.5)--(-0.7,1.5)
(-1.5,-0.7)--(1.5,-0.7)
(0.5,-1.5)--(0.5,1.5)
(-1.5,0.5)--(1.5,0.5)
(1.1,-1.5) -- (1.1,1.5)
(-1.5,1.1)--(1.5,1.1)
;
\node at (2.1,0) {$y=c_i$};
\node at (1.9,1.7) {$x=y$};
\end{tikzpicture}
\ \ 
\raisebox{-3mm}{\begin{tikzpicture}
\draw 
(-1.5,-1.5)--(1.5,1.5)
(-1.5,1.5)--(1.5,-1.5)
(-1.5,0) -- (1.5,0)
(0,-1.5)--(0,1.5)
(-1.5,0)--(1.5,0)
(1,-1.5)--(1,1.5)
(-1.5,1)--(1.5,1)
(-1,-1.5)--(-1,1.5)
(-1.5,-1)--(1.5,-1)

(0.4,-1.5)--(0.4,1.5)
(-1.5,0.4)--(1.5,0.4)
(-0.4,-1.5)--(-0.4,1.5)
(-1.5,-0.4)--(1.5,-0.4)
;
\node at (1.5,1.7) {$x=y$};
\node at (1.5,-1.7) {$x{+}y{=}0$};
\end{tikzpicture}}
\ \ 
\begin{tikzpicture}
\draw 
(-1.5,-1.5)--(1.5,1.5)
(-1.5,0) -- (1.5,0)
(0,-1.5)--(0,1.5)
(-1.5,-0.7)--(1.5,0.7)
(-1.5,0.8)--(1.5,-0.8)
(0.7,-1.5)--(-0.7,1.5)
;
\node at (2.2,0.8) {$y=c_ix$};
\node at (2.1,0) {$y=0$};
\node at (0,1.8) {$x=0$};
\end{tikzpicture}
\end{remark}
\section{Proof of Theorem~\ref{thm:stableAr}}
\begin{proof}[Proof of Theorem~\ref{thm:stableAr} for $n=2$]
Choose $(a,b)\in \mathcal L^{(2)}$.
Since $\mathcal A$ is $x_i$-closed, we have $\{x_1=a\},\ \{x_2=b\}\in\mathcal A$.
By the assumption of the theorem, we may therefore assume that, under a suitable coordinate system $(x,y)$ of $\mathbb C^2$,
\[
\{x=0\},\ \{y=0\},\ \{x=1\},\ \{y=1\},\ \{x=y\}\in\mathcal A.
\]
In this case, the condition $\{x=c\}\in\mathcal A$ equals $\{y=c\}\in\mathcal A$.

If $\#(\mathcal A_{x_1}\cap\mathcal A_{x_2})=1$, then the theorem follows immediately with $m=1$.

Hereafter we assume $\#(\mathcal A_{x_1}\cap\mathcal A_{x_2})>1$.  

Now suppose $\{y=x+c\}\in\mathcal A$ for some $c\ne0$.
If $\{x=d\}\in\mathcal A$, then $\{y=c+d\}\in\mathcal A$ and hence $\{x=c+d\}\in\mathcal A$.
By iteration, we obtain $\{x=nc+d\}\in\mathcal A$ for $n=0,1,2,\ldots$, which would imply $\#\mathcal A=\infty$.
Thus, under a suitable coordinate transformation, we may instead assume that $\{y=\alpha_1x\}\in\mathcal A$ with $\alpha_1\ne0,1$.

Hence we assume that
\[
 H_1=\{y=x\},\quad H_2=\{y=\alpha_1x\},\quad 
 H_3=\{y=\alpha_2x+\alpha_3\},\quad H_4=\{x=1\}\in\mathcal A.
\]
Here we do not necessarily assume $H_3\ne H_1$ or $H_3\ne H_2$.

If $\{x=z\}\in\mathcal A$ for some $z\in\mathbb C$, then
$\{x=z\}\cap H_2=\{(z,\alpha_1z)\}\in\mathcal L$, so $\{y=\alpha_1z\}\in\mathcal A$ and
therefore $\{x=\alpha_1z\}\in\mathcal A$.
Similarly, $\{x=z\}\in\mathcal A$ implies $\{x=\alpha_2z+\alpha_3\}\in\mathcal A$.
The desired conclusion then follows directly from Lemma~\ref{lem:Ar20}.
\end{proof}


\begin{lemma}\label{lem:Ar20}
Let $\alpha_j\in\mathbb C$ $(j=1,2,3)$ satisfy $\alpha_1\alpha_2(\alpha_1-1)\ne0$, and define
\begin{align*}
 T_1(z)&=\alpha_1z,\\
 T_2(z)&=\alpha_2z +\alpha_3.
\end{align*}
Suppose there exists a finite set $F\subset \mathbb C$ such that
$0\ne z\in F$ and $T_1(F)=T_2(F)=F$.
Then $\alpha_3=0$, and there exists an integer $m\ge2$ such that
\begin{align}\label{eq:n2}
\alpha_1^m=\alpha_2^m=1 
\quad\text{and}\quad
F\supset\bigl\{e^{\frac{2\pi k\sqrt{-1}}m}z\mid k=1,\dots,m\bigr\}.
\end{align}
\end{lemma}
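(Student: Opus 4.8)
The plan is to exploit that, since $F$ is finite and each $T_i$ is a bijection of $\mathbb C$ (here $\alpha_1,\alpha_2\neq0$), the hypotheses $T_1(F)=T_2(F)=F$ force $T_1$ and $T_2$ to restrict to \emph{permutations} of $F$. I would first extract $\alpha_3=0$ from an averaging identity, and only then read off the root-of-unity structure of $\alpha_1,\alpha_2$ from the finiteness of the orbit of the distinguished nonzero point $z$.

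For the first step I would sum over $F$. Writing $\sigma:=\sum_{w\in F}w$ and using that $T_1$ permutes $F$,
\[
 \sigma=\sum_{w\in F}T_1(w)=\alpha_1\sum_{w\in F}w=\alpha_1\sigma,
\]
so $(\alpha_1-1)\sigma=0$, whence $\sigma=0$ because $\alpha_1\neq1$. Applying the same permutation invariance to $T_2$ gives
\[
 0=\sigma=\sum_{w\in F}T_2(w)=\alpha_2\sigma+\#F\cdot\alpha_3=\#F\cdot\alpha_3,
\]
and since $\#F\ge1$ we conclude $\alpha_3=0$. (Equivalently, the finite affine group $\langle T_1,T_2\rangle$ has a common fixed point, which must coincide with the unique fixed point $0$ of $T_1$, so $T_2(0)=\alpha_3=0$; the summation is just the cleaner route.)

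Once $\alpha_3=0$, both maps are dilations $T_1(z)=\alpha_1z$ and $T_2(z)=\alpha_2z$. Since the nonzero point $z$ lies in the finite $T_1$-invariant set $F$, its orbit $\{\alpha_1^kz\}_{k\ge0}$ is finite, forcing $\alpha_1$ to be a root of unity; let $m_1$ be its order, so $m_1\ge2$ as $\alpha_1\neq1$, and the same argument applied to $T_2$ shows $\alpha_2$ is a root of unity of some order $m_2$. Setting $m:=\mathrm{lcm}(m_1,m_2)\ge2$ gives $\alpha_1^m=\alpha_2^m=1$. Finally $\langle\alpha_1,\alpha_2\rangle$ is a finite subgroup of $\mathbb C^{*}$ whose order is divisible by both $m_1$ and $m_2$ and divides $m$, hence equals $m$; being cyclic of order $m$ in $\mathbb C^{*}$ it is exactly the group $\mu_m$ of all $m$-th roots of unity. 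As $F$ is invariant under both dilations it contains the full orbit $\mu_m\cdot z=\{e^{2\pi k\sqrt{-1}/m}z\mid k=1,\dots,m\}$, which is the asserted inclusion.

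The only genuinely delicate point is isolating $\alpha_3=0$: $T_1$ and $T_2$ need not commute, so one cannot simply compare $T_1T_2$ with $T_2T_1$, and the averaging identity over $F$ is what sidesteps this. After that the argument is routine. I would also flag the essential role of $z\neq0$, which is precisely what converts finiteness of the dilation orbit into the root-of-unity condition; a finite set lacking a nonzero element would leave the $\alpha_i$ unconstrained.
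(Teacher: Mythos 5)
Your proof is correct, but it takes a genuinely different route from the paper's, most notably in how $\alpha_3=0$ is extracted. You sum over all of $F$: since $T_1$ and $T_2$ are injective and carry the finite set $F$ onto itself, they permute $F$, so $\sigma=\sum_{w\in F}w$ satisfies $\sigma=\alpha_1\sigma$ (forcing $\sigma=0$ as $\alpha_1\ne1$) and then $0=\sigma=\alpha_2\sigma+\#F\cdot\alpha_3$ gives $\alpha_3=0$ immediately; only afterwards do you obtain the root-of-unity conditions from finiteness of the dilation orbits of $z\ne0$. The paper argues in the opposite order: it first extracts $m>1$ with $\alpha_1^m=\alpha_2^m=1$ from finiteness of $F$, then takes $a\in F$ of maximal modulus and uses $|a+\alpha_1^{-k}\alpha_2^{-1}\alpha_3|\le|a|$ for $k=1,\dots,m$; since these $m$ points average to $a$ itself (the $\alpha_1^{-k}$ sum to zero) while lying in the closed disk of radius $|a|$, strict convexity of the disk forces $\alpha_3=0$. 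Your global summation replaces this extremal-point argument and is arguably cleaner: it needs no maximal element, no prior knowledge that $\alpha_1,\alpha_2$ are roots of unity, and it avoids the paper's side remark about fixed points of $T_2$. A further merit of your write-up is that you justify the inclusion $F\supset\bigl\{e^{\frac{2\pi k\sqrt{-1}}m}z\mid k=1,\dots,m\bigr\}$ carefully: taking $m=\mathrm{lcm}(m_1,m_2)$, the group $\langle\alpha_1,\alpha_2\rangle$ is a finite, hence cyclic, subgroup of $\mathbb C^*$ whose order is divisible by $m_1$ and $m_2$ and divides $m$, so it is exactly the group of all $m$-th roots of unity; the paper leaves both the choice of $m$ and this step implicit, and indeed an arbitrary common multiple $m$ would not suffice for the inclusion. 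One small caveat: your parenthetical alternative via a common fixed point of the finite group $\langle T_1,T_2\rangle$ tacitly needs $\#F\ge2$ so that the action on $F$ is faithful (this does hold, since $F=\{z\}$ with $z\ne0$ would force $\alpha_1=1$), but your main summation argument does not depend on it.
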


\begin{proof}
Let $a\in F$ be such that $|a|\ge|p|$ for all $p\in F$.
Note that at least one of $a$ or $T_1(a)$ is not a fixed point of $T_2$.  
Since $\#F<\infty$, there exists $m\in\mathbb Z_{>1}$ such that $\alpha_1^m=\alpha_2^m=1$.
For $k=1,2,\ldots,m$, we then have 
$|\alpha_1^k\alpha_2 a+\alpha_3|\le |a|$, hence 
$|a+\alpha_1^{-k}\alpha_2^{-1}\alpha_3|\le |a|$.
Since $\alpha_1\ne1$ and $\alpha_1^m=1$, this inequality holds only when $\alpha_3=0$, which proves \eqref{eq:n2}.
\end{proof}

We next prepare a lemma that reduces the proof of the theorem to lower-dimensional cases.

\begin{lemma}\label{lem:Arcut}
Let $\mathcal A$ be a hyperplane arrangement in 
$\mathbb C^n$ with coordinates $(x_1,x_2,\dots,x_n)$ and $n\ge3$.
Fix an integer $m$ with $1<m<n$ and a point $p\in\mathbb C^{n-m}$.
Define
\[
V':=\{(x_1,\dots,x_n)\mid (x_{m+1},\dots,x_n)=p\},
\]
and call
\[
\mathcal A':=\{H\cap V'\mid H\in\mathcal A\}\setminus\{\emptyset,\,V'\}
\]
a {\em specialization} of $\mathcal A$. 
For $i=1,\dots,m$, $\mathcal A'$ is $x_i$-closed whenever $\mathcal A$ is $x_i$-closed.
Moreover, if $\mathcal A$ is reduced, then so is $\mathcal A'$. 
\end{lemma}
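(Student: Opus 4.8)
The plan is to exploit the correspondence between $L(\mathcal A')$ and $L(\mathcal A)$ induced by restriction to the slice $V'$, which I identify with $\mathbb C^m$ via the coordinates $(x_1,\dots,x_m)$. First I would record the elementary fact that
$L(\mathcal A')=\{\,S\cap V'\mid S\in L(\mathcal A),\ S\cap V'\neq\emptyset\,\}$: every hyperplane of $\mathcal A'$ is $H\cap V'$ for some $H\in\mathcal A$, so any intersection of elements of $\mathcal A'$ is the restriction of the corresponding intersection in $L(\mathcal A)$; conversely, restricting $S=\bigcap_{H\in\mathcal B}H$ gives $\bigcap_{H\in\mathcal B}(H\cap V')$, where the factors equal to $V'$ may be dropped and no factor is empty once $S\cap V'\neq\emptyset$. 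In particular, given $S'\in L(\mathcal A')$ I can always pick a lift $S\in L(\mathcal A)$ with $S\cap V'=S'$.

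For the $x_i$-closedness with $i\le m$, the key geometric point is that the direction $x_i=e_i$ is parallel to $V'$, so $\langle x_i,S\rangle\cap V'=\langle x_i,\,S\cap V'\rangle$ for every $S\in L(\mathcal A)$: the inclusion $\supseteq$ is immediate, and for $\subseteq$ one checks that a point $te_i+y\in V'$ forces $y\in V'$, since the last $n-m$ coordinates of $te_i$ vanish and those of $y$ must equal $p$, whence $y\in S\cap V'$. Granting this, I would take an arbitrary $S'\in L(\mathcal A')^{(2)}$, lift it to $S\in L(\mathcal A)$ with $S\cap V'=S'$, and invoke Lemma~\ref{lem:arH}(v) — which yields $\langle x_i,S\rangle\in L(\mathcal A)$ for \emph{every} $S\in L(\mathcal A)$ once $\mathcal A$ is $x_i$-closed, not merely for codimension-two $S$. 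Restricting, $\langle x_i,S'\rangle=\langle x_i,S\rangle\cap V'$ is the restriction of an element of $L(\mathcal A)$ and contains $S'$, hence is a non-empty member of $L(\mathcal A')$; this proves $\mathcal A'$ is $x_i$-closed.

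For reducedness I would argue the contrapositive, producing a witness of reducibility for $\mathcal A$ out of one for $\mathcal A'$. Reducibility of $\mathcal A'$ provides, after a coordinate change of $V'\cong\mathbb C^m$ of the form \eqref{eq:Arcoord}, a nonzero $v'\in\operatorname{span}(e_1,\dots,e_m)$ supported on two coordinates that is parallel to every hyperplane of $\mathcal A'$. Extending that coordinate change by the identity on $x_{m+1},\dots,x_n$, I would regard $v'$ as a vector of $\mathbb C^n$ and claim it is parallel to every $H\in\mathcal A$. Writing $H=\{c\cdot x=d\}$, the identity $c\cdot v'=\sum_{k\le m}c_kv'_k$ shows $v'$ is parallel to $H$ iff it is parallel to $H\cap V'$, settling all $H$ with $H\cap V'\in\mathcal A'$; for the remaining $H$, those with $H\cap V'\in\{\emptyset,V'\}$, the defining equation involves none of $x_1,\dots,x_m$, so $c_1=\dots=c_m=0$ and $v'$ is trivially parallel to $H$. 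Hence $v'$, still supported on two coordinates, is parallel to all of $\mathcal A$, exhibiting $\mathcal A$ as a pullback along the map combining those two coordinates, i.e. as reducible.

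I expect the reducedness claim to be the main obstacle, chiefly because it requires reconciling the coordinate-based definition of ``reducible'' (a pullback along a map merging two coordinates) with the coordinate-free picture of a common parallel direction, while tracking that the witness $v'$ remains supported on exactly two coordinates after lifting. The $x_i$-closed part is essentially mechanical once the compatibility $\langle x_i,S\rangle\cap V'=\langle x_i,\,S\cap V'\rangle$ and the full strength of Lemma~\ref{lem:arH}(v) are in hand; the only subtlety there is that the lift $S$ of a codimension-two $S'$ need not itself have codimension two, which is precisely why I rely on Lemma~\ref{lem:arH}(v) rather than the bare definition of $x_i$-closedness.
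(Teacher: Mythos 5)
Your proof is correct, and its crux --- the compatibility identity $\langle x_i,S\rangle\cap V'=\langle x_i,\,S\cap V'\rangle$ for $i\le m$ --- is precisely the identity on which the paper's proof turns (there written as $\langle x_i,H_1'\cap H_2'\rangle=\langle x_i,H_1\cap H_2\rangle\cap V'$). The routes differ in two respects. For $x_i$-closedness, the paper does not need Lemma~\ref{lem:arH}(v): it chooses a better lift. Given $S'\in L(\mathcal A')^{(2)}$, write $S'=H_1'\cap H_2'$ with $H_1',H_2'\in\mathcal A'$ distinct and intersecting; the lifted hyperplanes $H_1,H_2\in\mathcal A$ are then themselves distinct and intersecting (because their restrictions are), so $S:=H_1\cap H_2$ lies in $L(\mathcal A)^{(2)}$ and the bare definition of $x_i$-closedness suffices. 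Your observation that an \emph{arbitrary} lift of $S'$ need not have codimension two is what drives you to the stronger statement ``the poset $\mathcal L$ is $v$-closed'' from Lemma~\ref{lem:arH}(v); that is legitimate, but the codimension-two lift makes it unnecessary. For reducedness, the situation is reversed: the paper disposes of the claim with the single remark that $\dim H_j'=m-1$ exactly when the $x'$-part $f_j$ of the defining form is nonzero, leaving the rest implicit, whereas you give the full argument --- lifting a common parallel direction $v'$ of $\mathcal A'$ to $(v',0)\in\mathbb C^n$, and noting that any $H\in\mathcal A$ with $H\cap V'\in\{\emptyset,V'\}$ has defining form free of $x_1,\dots,x_m$ and is therefore automatically parallel to $(v',0)$. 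Your version also correctly tracks the two-coordinate support of $v'$, so the conclusion matches the paper's coordinate-based definition of reducibility; in effect you have supplied the detail the paper's remark leaves to the reader.
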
		

\begin{proof}
For $H_1,H_2\in\mathcal A$, set $H_j':=H_j\cap V'$ for $j=1,2$.
Assume $H'_j\ne\emptyset$ and $H'_j\ne V'$ for $j=1,2$, and further that
$H'_1\ne H'_2$ but $H_1'\cap H_2'\ne\emptyset$.
Write
\[
  H_j=\{(x',x'')\in\mathbb C^n\mid f_j(x')=g_j(x'')\},
  \quad
  H_j'=\{x'\in\mathbb C^m\mid f_j(x')=g_j(p)\},
\]
where 
$f_j(x')$ and $g_j(x'')$ are affine linear polynomials in 
$x'=(x_1,\dots,x_m)$ and $x''=(x_{m+1},\dots,x_n)$, respectively, satisfying $f_j(0)=0$.
The assumptions imply that $f_1$ and $f_2$ are linearly independent over $\mathbb C$.
It then follows that 
\[
\langle x_i,H_1'\cap H_2'\rangle
   =\langle x_i,H_1\cap H_2\rangle\cap V'
   \qquad (i=1,\dots,m),
\]
which proves the claim of the lemma.  
Note that $\dim H'_j=m-1$ holds precisely when $f_j\ne0$ and $f_j(0)=0$. 
\end{proof}

\begin{remark}
A stable hyperplane arrangement $\mathcal A$ in Theorem~\ref{thm:stableAr} with $r>0$ and $n\ge2$ 
can be viewed as a suitable specialization of a stable homogeneous hyperplane arrangement. 
For example, by setting $x_3=0$ and $x_4=1$ in 
$\bigl\{\{x_i=x_j\}\subset\mathbb C^4\mid 1\le i<j\le 4\bigr\}$, 
we obtain the hyperplane arrangement 
$\bigl\{\{x_1=x_2\},\,\{x_i=0\},\,\{x_i=1\}\mid i=1,2\bigr\}$ 
in $\mathbb C^2$.
\end{remark}

The following lemma is the key step in proving the theorem for the case $n\ge3$.

\begin{lemma}\label{lem:Ar3n}
Under the assumption of Theorem~\ref{thm:stableAr}, 
we have 
\[
\mathcal A_{x_i}\cap\mathcal A_{x_j}\cap\mathcal A_{x_k}=\emptyset
\quad \text{for } 1\le i<j<k\le n.
\]
\end{lemma}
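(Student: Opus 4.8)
The plan is to argue by contradiction: suppose $H_0\in\mathcal A_{x_i}\cap\mathcal A_{x_j}\cap\mathcal A_{x_k}$ with $i<j<k$; after the coordinate permutation allowed in \eqref{eq:Arcoord} we may take $(i,j,k)=(1,2,3)$, so the defining equation of $H_0$ has nonzero coefficients on $x_1,x_2,x_3$ (Remark~\ref{rm:funvec}(ii)). First I would reduce to $n=3$. For $n\ge 4$, fix generic constants $p=(p_4,\dots,p_n)$ and pass to the specialization $\mathcal A'$ of Lemma~\ref{lem:Arcut} with $m=3$: then $\mathcal A'\subset\mathbb C^3$ is stable and reduced, and the image of $H_0$ still has three nonzero coefficients (specialization changes only the constant term, not the coefficients of $x_1,x_2,x_3$), hence lies in $\mathcal A'_{x_1}\cap\mathcal A'_{x_2}\cap\mathcal A'_{x_3}$. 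A short genericity argument shows that for generic $p$ the arrangement $\mathcal A'$ is again indecomposable and non-trivial: distinct $S\in L(\mathcal A)^{(2)}$ cut out distinct lines $S\cap V'\in L(\mathcal A')^{(2)}$, so $\#L(\mathcal A')^{(2)}>1$, while the image of $H_0$ forces every pairwise intersection $\mathcal A'_{x_a}\cap\mathcal A'_{x_b}\ne\emptyset$. Thus it suffices to treat $n=3$.

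In $\mathbb C^3$ I would pass to normals. Write $\nu(H)=[c_1:c_2:c_3]\in\mathbb P^2$ for $H=\{c_1x_1+c_2x_2+c_3x_3+c=0\}$, let $L_i=\{c_i=0\}\subset\mathbb P^2$ be the three coordinate lines, and set $\mathcal N=\{\nu(H)\mid H\in\mathcal A\}$, a finite set. By Remark~\ref{rm:funvec}(ii), $H\in\mathcal A_{x_i}$ means $\nu(H)\notin L_i$; and if $H,H'\in\mathcal A_{x_i}$ have independent normals, then $\langle x_i,H\cap H'\rangle\in\mathcal A$ (stability) has normal equal to the third intersection point $\overline{\nu(H)\nu(H')}\cap L_i$. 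Hence $\mathcal N$ is closed under the operation ``join two points lying off $L_i$ and adjoin the meet of that join with $L_i$''. The hyperplane $H_0$ contributes a point $P_0\in\mathcal N$ off all three $L_i$. Finally, unwinding reducedness through Lemma~\ref{lem:Arred}, reducedness of $\mathcal A$ means that any $v\ne 0$ annihilating all normals has all coordinates nonzero; equivalently, either (a) $\mathcal N$ spans, i.e.\ is not contained in a line of $\mathbb P^2$, or (b) $\mathcal N$ lies on a single line, whose pole $v_0$ then has all coordinates nonzero.

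In case (b) every hyperplane is parallel to the fixed direction $v_0$, so $\mathcal A=\rho^{-1}(\mathcal A'')$ for the projection $\rho:\mathbb C^3\to\mathbb C^3/\langle v_0\rangle\cong\mathbb C^2$ and a line arrangement $\mathcal A''$. Since $v_0$ has no zero coordinate, the directions $\rho(e_1),\rho(e_2),\rho(e_3)$ are pairwise distinct and nonzero, and one checks $\mathcal A$ is $x_i$-closed if and only if $\mathcal A''$ is $\rho(e_i)$-closed. Thus $\mathcal A''$ is stable with respect to \emph{three} distinct directions. The already-established case $n=2$ of Theorem~\ref{thm:stableAr} shows that the pencil-plus-parallel-families normal form admits a third independent closure direction only when $\#L(\mathcal A'')^{(2)}=1$; hence $\mathcal A''$, and therefore $\mathcal A=\rho^{-1}(\mathcal A'')$, is trivial, contradicting $\#\mathcal L^{(2)}>1$.

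In case (a) I would derive a contradiction from finiteness of $\mathcal N$. Since $\mathcal N$ spans, there are $Q_1,Q_2\in\mathcal N$ with $P_0,Q_1,Q_2$ non-collinear; the lines $\overline{P_0Q_1},\overline{P_0Q_2}$ are not coordinate lines, their axis-intercepts lie in $\mathcal N$, and joining intercepts coming from distinct lines and meeting the coordinate lines produces an infinite progression on a fixed coordinate line—concretely, after normalizing $P_0=[1:1:1]$ by a diagonal projectivity one is led to the points $[0:k:1]$ for all $k$—which contradicts $\#\mathcal N<\infty$. To organize this, I would record the dynamics as a composition of two perspectivities onto one coordinate line $L_i\cong\mathbb P^1$, each fixing the vertex $L_i\cap L_j$, so that the composition is an affine self-map of $L_i$ preserving the finite set $\mathcal N\cap L_i$, to which Lemma~\ref{lem:Ar20} applies. \textbf{The main obstacle} is exactly this step: showing that the off-axis point $P_0$ is incompatible with a finite, non-collinear, operation-closed $\mathcal N$, i.e.\ that the progression never terminates. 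The delicate points are controlling degenerate configurations (generated points coinciding, or landing on a vertex $L_i\cap L_j$) and arranging that Lemma~\ref{lem:Ar20} can be fed two affine maps satisfying its hypothesis $\alpha_1\alpha_2(\alpha_1-1)\ne 0$, so that its conclusion ($\alpha_3=0$ and roots of unity) contradicts $P_0$ lying off every coordinate line.
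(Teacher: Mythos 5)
Your reduction to $n=3$ is where the proposal breaks. Lemma~\ref{lem:Arcut} does give stability and reducedness of the specialization $\mathcal A'$, and the image of $H_0$ does make $\mathcal A'$ indecomposable, but non-triviality does \emph{not} descend, and your justification for it is false as stated: a flat $S\in L(\mathcal A)^{(2)}$ such as $\{x_1=0\}\cap\{x_4=0\}$ satisfies $S\cap V'=\emptyset$ for generic $p$, and a flat cut out by two hyperplanes whose $(x_1,x_2,x_3)$-coefficients are proportional never yields a line of $L(\mathcal A')^{(2)}$. This is not a technicality, because your case (b) collapses without $\#L(\mathcal A')^{(2)}>1$: the paper's own example $\mathcal A''=\bigl\{\{x_1+x_2+x_3=0\},\{x_1=x_2\},\{2x_1+x_3=0\},\{2x_2+x_3=0\}\bigr\}$ is a stable arrangement in $\mathbb C^3$ containing a hyperplane transversal to all three axes, whose normals are coplanar with pole $(1,1,-2)$ having all coordinates nonzero --- exactly your case (b) --- and it is trivial, so no contradiction exists at the specialized level. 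If the generic specialization of the hypothetical $\mathcal A$ were of this kind, you would have to lift that structure back to $\mathbb C^n$ and argue there; nothing in the proposal does this. This is precisely why the paper does not reduce to $n=3$: it uses specializations (at explicit points $x_4=\cdots=0$) only to import contradictions against stability and reducedness, the two properties Lemma~\ref{lem:Arcut} actually preserves, and it needs the genuinely four-variable Lemma~\ref{lem:Ar4} --- whose hypothesis involves a hyperplane $\{w=h_1\}$ supported outside $x_1,x_2,x_3$, hence invisible after your specialization --- to dispose of the case $\mathcal A_{x_1}^c\cap\cdots\cap\mathcal A_{x_m}^c\ne\emptyset$. (Your case (b) also quietly needs that a non-trivial stable arrangement in $\mathbb C^2$ admits no third closure direction, which is the $n=2$ part of the Proposition in \S\ref{sec:closed}, not of Theorem~\ref{thm:stableAr} itself.)

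Second, even granting the reduction, case (a) is not proved: you yourself flag as ``the main obstacle'' the step showing that the off-axis point $P_0$ forces an infinite orbit, with the degenerate configurations and the hypothesis $\alpha_1\alpha_2(\alpha_1-1)\ne0$ of Lemma~\ref{lem:Ar20} left uncontrolled. That step is the mathematical heart of the lemma. The paper's substitute is entirely explicit: in Lemma~\ref{lem:Ar3}(i) the hyperplane $\{x_1+x_2+x_3=0\}$ together with two coordinate hyperplanes generates $\{(n+1)x_1=x_2\}\in\mathcal A$ for every $n$, and in Lemma~\ref{lem:Ar4} a nine-step chain of closure operations generates $\{y+az+2^nw=h_{n+2}\}\in\mathcal A$ for every $n$; both contradict $\#\mathcal A<\infty$. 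Until you produce the analogous explicit iteration and handle the degenerations you mention, the proposal is a plan rather than a proof.
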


\begin{proof}[Proof of Theorem~\ref{thm:stableAr} for $n\ge3$ assuming Lemma~\ref{lem:Ar3n}]
We know that 
$\mathcal A_{x_i}\cap\mathcal A_{x_j}\ne\emptyset$
while 
$\mathcal A_{x_i}\cap\mathcal A_{x_j}\cap\mathcal A_{x_k}=\emptyset$ 
for distinct indices $i,j,k\in\{1,\dots,n\}$.
Hence, we may assume that $\{x_i=x_{i+1}\}\in\mathcal L^{(2)}$ for $i=1,\dots,n-1$.
It then follows that $\{x_i=x_j\}\in\mathcal L^{(2)}$ for all $1\le i<j\le n$.
Applying Lemma~\ref{lem:Arcut} together with the case $n=2$, we obtain the theorem.

Indeed, if $\#(\mathcal A_{x_1}\cap\mathcal A_{x_2})=1$, 
then the result corresponds to the case $m=1$.
If there exist two hyperplanes $H_1,H_2\in\mathcal A_{x_1}\cap\mathcal A_{x_2}$, 
we may assume $H_1\cap H_2\subset \{x_1=x_2=0\}$, and the theorem follows immediately.
Note that if $\{x_1=a_2x_2\},\ \{x_2=a_3x_3\}\in\mathcal A$, then 
$\{x_1=a_2a_3x_3\}$ and $\{x_1=a_2a_3x_2\}\in\mathcal A$, and therefore 
$\mathcal A\supset\mathcal A'$ with $m>1$.
\end{proof}

Now we prove Lemma~\ref{lem:Ar3n}, starting with the case $n=3$.

\begin{lemma}\label{lem:Ar3}
Let $\mathcal A$ be a stable hyperplane arrangement in $\mathbb C^3$. 
Suppose $\mathcal A_{x_1}\cap\mathcal A_{x_2}\cap\mathcal A_{x_3}\ne\emptyset$. 

{\rm (i)}\,
Then $\bigl\{\{x_1=c_1\},\ \{x_2=c_2\}\bigr\}\not\subset\mathcal A$
for any $c_1,\,c_2\in\mathbb C$.

{\rm (ii)}\,
Moreover, if $\{x_1=c_1\}\subset \mathcal A$ or 
$\#\mathcal L^{(2)}>1$, then $\mathcal A$ is reducible.
\end{lemma}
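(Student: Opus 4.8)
The plan is to begin by normalizing the three-variable hyperplane whose existence is guaranteed by the hypothesis. Picking $H\in\mathcal A_{x_1}\cap\mathcal A_{x_2}\cap\mathcal A_{x_3}$ and solving its defining equation for $x_3$, we may write $H=\{x_3=px_1+qx_2+s\}$ with $p,q\ne0$ (by Remark~\ref{rm:funvec} all three coefficients are nonzero). Everything else is then read off from the three closedness operations $\langle x_i,\cdot\rangle$.

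For (i) I would argue by contradiction, assuming $\{x_1=c_1\},\{x_2=c_2\}\in\mathcal A$, and run a short chain of closedness steps. Applying $x_1$-closedness to the line $H\cap\{x_1=c_1\}$ (whose direction is $(0,1,q)$) produces $H_1=\{x_3=qx_2+pc_1+s\}$; applying $x_2$-closedness to $H_1\cap\{x_2=c_2\}$ produces the coordinate plane $C=\{x_3=c_3\}$ with $c_3=pc_1+qc_2+s$; and applying $x_3$-closedness to $C\cap H$ produces $G=\{px_1+qx_2=c_3-s\}$. Now I would specialize, via Lemma~\ref{lem:Arcut}, to a generic slice $\{x_3=e\}$ with $e\ne c_3$, obtaining a stable arrangement $\mathcal A'$ in $\mathbb C^2$. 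In $\mathcal A'$ the images of $H$ and $G$ are the two \emph{parallel} lines $\{px_1+qx_2=e-s\}$ and $\{px_1+qx_2=c_3-s\}$ (both using both coordinates), while $\{x_1=c_1\}$ and $\{x_2=c_2\}$ survive as a vertical and a horizontal line and $C$ disappears. Thus $\mathcal A'$ is reduced (two non-parallel axis lines preclude a common direction), indecomposable (a slanted line lies in $\mathcal A'_{x_1}\cap\mathcal A'_{x_2}$), and non-trivial; but the $n=2$ case of Theorem~\ref{thm:stableAr} forces every slanted line of such an arrangement to pass through one common point, contradicting the two parallel ones. The point that makes this work is that concurrency and parallelism of lines are invariant under the admissible coordinate changes \eqref{eq:Arcoord}, so the standard form cannot accommodate two parallel slanted lines.

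For (ii) I would prove reducibility by exhibiting a vector $w$ orthogonal to every hyperplane normal, which is reducibility by Lemma~\ref{lem:Arred}. If a coordinate plane, say $\{x_1=c_1\}$, lies in $\mathcal A$, the candidate is $w=(0,1,q)$, the direction of $H\cap\{x_1=c_1\}$. Supposing some $H'$ had normal $n'$ with $n'\cdot w\ne0$, I would use $x_1$-closedness of $H'\cap\{x_1=c_1\}$ to obtain a plane $H'_1$ with normal $(0,n'_2,n'_3)$ not parallel to that of $H_1$, intersect $H'_1$ with $H_1$ to get a line parallel to the $x_1$-axis, and apply $x_2$-closedness to convert it into a coordinate plane $\{x_3=c\}$; together with $\{x_1=c_1\}$ this produces coordinate planes in two directions, contradicting (i). If instead $\#\mathcal L^{(2)}>1$ and (by the previous case) no coordinate plane occurs, I would argue: (a) every line $S\in\mathcal L^{(2)}$ has a fully nonzero direction vector $d(S)$, since a vanishing coordinate of $d(S)$ would make some $\langle x_i,S\rangle$ a coordinate plane; and (b) any two non-parallel hyperplanes sharing a vanishing normal-coordinate meet in an axis-parallel line, which (a) forbids. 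Hence, as $S$ ranges over $\mathcal L^{(2)}$, the planes $\langle x_3,S\rangle$ are mutually parallel, and likewise for $\langle x_1,S\rangle$ and $\langle x_2,S\rangle$; comparing two distinct lines through the cross-products $d(S)\times e_i$ then forces $d(S_1)\parallel d(S_2)$, so all of $\mathcal L^{(2)}$ is parallel to one direction $d$. Since $\#\mathcal L^{(2)}>1$ rules out an isolated hyperplane (otherwise all hyperplanes would be parallel and $\mathcal L^{(2)}=\emptyset$), every normal is orthogonal to $d$ and $\mathcal A$ is reducible.

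I expect part (ii) to be the main obstacle, and within it the passage from the local closedness moves to the global coplanarity of all normals. The delicate computation is verifying that parallelism of the $x_ix_j$-projections of $d(S_1)$ and $d(S_2)$ for all three pairs $ij$ is equivalent to $d(S_1)\parallel d(S_2)$, i.e.\ that the three cross-products $d(S)\times e_i$ together encode exactly the vanishing of $d(S_1)\times d(S_2)$. Getting part (i) into the crisp form ``no coordinate planes in two distinct directions'' is what lets it be reused as the standing obstruction throughout the proof of (ii).
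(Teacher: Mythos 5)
Your proof is correct, but it takes a genuinely different route from the paper's in both parts. For (i), the paper never leaves dimension three: after normalizing $H=\{x_1+x_2+x_3=0\}$ and $c_1=c_2=0$, it runs an induction producing $\{nx_1=x_2\}\in\mathcal A$ for every $n\ge 0$, contradicting $\#\mathcal A<\infty$; you instead manufacture a pair of distinct parallel non-coordinate lines together with two coordinate lines, specialize by Lemma~\ref{lem:Arcut}, and quote the already-proved $n=2$ case of Theorem~\ref{thm:stableAr}. Both are valid (the $n=2$ theorem is established before this lemma, so there is no circularity), and the invariance you lean on --- parallelism and ``not being a coordinate direction'' are preserved by transformations \eqref{eq:Arcoord} --- is exactly the invariance the paper itself needs in its own proof of (ii), where specialization plus the $n=2$ theorem forces $a=1$ for the parallel family $\{x_1+ax_2=c_i\}$. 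For (ii), your Case A is essentially the paper's first case in different clothing: any hyperplane not containing the candidate common direction is converted by two closedness moves into a coordinate plane in a second direction, contradicting (i). Your Case B, however, is genuinely different: the paper reduces to $\#\mathcal A_{x_3}^c\ge2$ by a counting argument and then invokes the $n=2$ classification again, whereas your direction argument (every $d(S)$ has all coordinates nonzero; for each $i$ the planes $\langle x_i,S\rangle$ are pairwise parallel; the three pairwise-proportional projections force $d(S_1)\parallel d(S_2)$; hence every hyperplane contains one common direction) is self-contained, avoids the classification theorem entirely in that case, and in fact does not even use the hypothesis $\mathcal A_{x_1}\cap\mathcal A_{x_2}\cap\mathcal A_{x_3}\ne\emptyset$ there. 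Two small points you should make explicit: first, Case A applies (i) to the pair $\{x_1=c_1\},\{x_3=c\}$, so state that (i) holds for any two distinct coordinate directions by the permutation symmetry of the hypothesis (the paper uses the same symmetry silently); second, in Case B your common direction $d$ has all coordinates nonzero, so the reducing projection is not along the span of two coordinate vectors --- this is still ``reducible'' in the paper's sense, since the definition permits an arbitrary ambient coordinate change before applying Lemma~\ref{lem:Arred}, but it deserves a sentence because everywhere else in the paper the common direction lies in a two-coordinate span.
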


\begin{proof}
We may assume $H:=\{x_1+x_2+x_3=0\}\in \mathcal A$.
Suppose $\{x_1=c_1\},\{x_2=c_2\}\in\mathcal A$.
Then, by translation, we may further assume $c_1=c_2=0$.
Since $(x_1+x_2+x_3)-x_1 = x_2+x_3$, we have $\{x_2+x_3=0\}\in \mathcal A$.
If $\{nx_1=x_2\}\in\mathcal A$ (which holds for $n=0$), then
the relations
\[
(x_1+x_2+x_3)+(nx_1-x_2)=(n+1)x_1+x_3,\quad
(n+1)x_1+x_3-(x_2+x_3)=(n+1)x_1-x_2,
\]
imply $\{(n+1)x_1=x_2\}\in \mathcal A$, contradicting $\#\mathcal A<\infty$.
Thus we have (i). 

If $\{x_1=c_1\}\subset\mathcal A$, then $\{x_2+x_3=-c_1\}\subset\mathcal A$, 
and (i) implies that $\mathcal A$ is reducible.
In fact, if $H':=\{a_1x_1+a_2x_2+a_3x_3=a_0\}\in \mathcal A$ with $a_2\ne a_3$, 
then $\langle x_1,H\cap H'\rangle=\{(a_2-a_1)x_2+(a_3-a_1)x_3=a_0\}\in\mathcal A$, and moreover
$\{x_2=c_2\}$, $\{x_3=c_3\}\in\mathcal A$ with suitable $c_2$ and $c_3$. 

Now suppose $\#\mathcal L^{(2)}\ge 2$.  
Since $\mathcal A$ is stable, we have $\#\mathcal A_{x_i}^c\ge 1$ for $i=1,2,3$.
Assume $\#\mathcal A_{x_i}^c=1$ for $i=1,2,3$ and write $\mathcal A_{x_i}^c=\{\{H_i\}\}$.
Then for $S\in\mathcal L^{(2)}$, we have $S\subset H_i$ for all $i=1,2,3$.
Since 
$\mathcal A_{x_1}^c\cap\mathcal A_{x_2}^c\cap\mathcal A_{x_3}^c=\emptyset$,  
it follows that $S=H_1\cap H_2\cap H_3$, 
hence $\mathcal L^{(2)}=\{H_1\cap H_2\cap H_3\}$ and $\#\mathcal L^{(2)}=1$.
Thus we may assume $\#\mathcal A_{x_3}^c\ge 2$.

Write 
\[
\mathcal A_{x_3}^c=\bigl\{\{x_1+ax_2=c_i\}\mid i=1,2,\ldots\bigr\},
\]
where $a\ne0$ and $c_1\ne c_2$. 
Since $\{x_1+x_2+x_3=0\}\subset \mathcal A$, 
it follows from Lemma~\ref{lem:Arcut}, Lemma~\ref{lem:Ar3}~(i), 
and the theorem for $n=2$ that $a=1$, and hence $\mathcal A$ is reducible. 
\end{proof}

\begin{lemma}\label{lem:Ar4}
Let $\mathcal A$ be a reduced hyperplane arrangement in $\mathbb C^n$
with $n\ge 4$. 
Put $x=x_1,\ y=x_2,\ z=x_3,\ w=x_4,\ x'=(x_5,\dots,x_n).$
If 
\[
\{x+y+z=h_0\},\quad \{w=h_1\},\quad \{y+az+w=h_2\}
\]
belong to $\mathcal A$, then $\mathcal A$ is not stable.
Here $h_j$ are affine linear polynomials in $x'$.
\end{lemma}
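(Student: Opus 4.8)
The plan is to assume, for contradiction, that $\mathcal A$ is stable, and then to produce a $3$-dimensional specialization to which Lemma~\ref{lem:Ar3} applies, contradicting the reducedness of $\mathcal A$. The mechanism is that Lemma~\ref{lem:Ar3}(ii) forbids a stable arrangement in $\mathbb C^3$ from simultaneously exhibiting three coordinates meeting in a common hyperplane and an axis-parallel hyperplane $\{x_i=c\}$. Our three seed hyperplanes are arranged so that, after slicing, $H_3$ supplies the triple concurrency and one of $H_2$, $H_3$ supplies the axis-parallel hyperplane, while Lemma~\ref{lem:Arcut} guarantees the slice is again stable and reduced.

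First I would treat the generic case $a\ne0$. Specialize $\mathcal A$ by fixing $x_1$ and $x_5,\dots,x_n$ at a point $p$ and keeping $(x_2,x_3,x_4)=(y,z,w)$; after a coordinate permutation putting these first, Lemma~\ref{lem:Arcut} shows the resulting $\mathcal A''\subset\mathbb C^3$ is stable and reduced. Under this specialization the affine functions $h_1,h_2$ collapse to the constants $h_1(p),h_2(p)$, so $H_3$ restricts to $\{y+az+w=h_2(p)\}$ and $H_2$ to $\{w=h_1(p)\}$. Since $a\ne0$, the covector $(1,a,1)$ has no zero entry, so the restriction of $H_3$ lies in $\mathcal A''_{x_2}\cap\mathcal A''_{x_3}\cap\mathcal A''_{x_4}$, giving the triple concurrency required by Lemma~\ref{lem:Ar3}, while $\{w=h_1(p)\}$ is axis-parallel. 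Hence Lemma~\ref{lem:Ar3}(ii) forces $\mathcal A''$ to be reducible, contradicting the reducedness delivered by Lemma~\ref{lem:Arcut}.

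For the degenerate case $a=0$ the hyperplane $H_3=\{y+w=h_2\}$ no longer witnesses a triple concurrency, so I would first manufacture an axis-parallel hyperplane. Since $H_2$ and $H_3$ both involve $w$ and meet in codimension $2$, the $x_4$-closedness of the (assumed stable) $\mathcal A$ places $\langle x_4,H_2\cap H_3\rangle=\{y=h_2-h_1\}$ in $\mathcal A$ by the last formula of Remark~\ref{rm:funvec}(ii). Now specialize instead to the slice $(x_1,x_2,x_3)=(x,y,z)$, fixing $x_4,x_5,\dots,x_n$ at $p$. There $H_1$ restricts to $\{x+y+z=h_0(p)\}$, whose covector $(1,1,1)$ yields the concurrency of $x,y,z$, while the manufactured hyperplane restricts to the axis-parallel $\{y=(h_2-h_1)(p)\}$. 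Exactly as before, Lemma~\ref{lem:Ar3}(ii) gives reducibility of this stable, reduced slice, a contradiction.

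The step demanding the most care is the passage through the specialization: one must confirm that the $x'$-dependent right-hand sides genuinely collapse to constants, so that $\{w=h_1(p)\}$ (respectively $\{y=(h_2-h_1)(p)\}$) is honestly axis-parallel in the slice, that the restricted hyperplanes remain of dimension $m-1$ and distinct, and that both the stability and the concurrency hypothesis of Lemma~\ref{lem:Ar3} transfer correctly once the kept coordinates are relabeled. These checks are routine, since Lemma~\ref{lem:Arcut} imposes no genericity on $p$; the only genuine case distinction is whether $a=0$, which merely dictates which of the two slices carries the triple concurrency.
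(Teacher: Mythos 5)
Your proof is correct, but in the main case it takes a genuinely different route from the paper's. The paper splits into three cases $a=0$, $a=1$, and $a\ne0,1$: the first is essentially your second slice (specialize $x_4=\cdots=x_n=0$, so that $H_1=\{x+y+z=h_0\}$ gives the triple concurrency, $H_3$ becomes axis-parallel, and Lemma~\ref{lem:Ar3}(ii) together with Lemma~\ref{lem:Arcut} yields the reducible-versus-reduced contradiction); the case $a=1$ is reduced to that one by first producing $\{x-w=h_0-h_2\}$ via $y$-closedness; but for $a\ne0,1$ the paper abandons slicing entirely and instead runs a long chain of closedness relations ($z$-, $y$-, $w$-, $x$-closed, repeatedly) to manufacture $\{y+az+2w=h_3\}\in\mathcal A$ and hence $\{y+az+2^kw=h_{k+2}\}\in\mathcal A$ for all $k\ge0$, contradicting $\#\mathcal A<\infty$. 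Your observation that for \emph{every} $a\ne0$ one can instead freeze $x_1$ and $x'$, so that $H_3$ itself restricts to the triple-concurrency hyperplane of the slice while $H_2$ restricts to the axis-parallel one, lets Lemma~\ref{lem:Ar3}(ii) do all the work uniformly; this eliminates the paper's computation, merges $a=1$ into the generic case, and shows as a byproduct that the hypothesis $\{x+y+z=h_0\}\in\mathcal A$ is only needed when $a=0$. The cost is nothing beyond what the paper already uses: both arguments rely on Lemma~\ref{lem:Arcut} (including its preservation of reducedness) and on Lemma~\ref{lem:Ar3}(ii), neither of which depends on the present lemma, so there is no circularity. Two small points deserve explicit mention in a final write-up: the coordinate permutation needed before invoking Lemma~\ref{lem:Ar3}(ii), which is phrased with the axis-parallel hyperplane in the first coordinate (harmless, since the hypotheses of that lemma are symmetric under permutations of the coordinates); and, in your $a=0$ case, the manufactured hyperplane $\{y=h_2-h_1\}$ is actually redundant, because once $w$ is frozen at $p_4$ the restriction of $H_3$ is already the axis-parallel hyperplane $\{y=h_2(p)-p_4\}$.
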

\begin{proof}
If $a=0$, then it follows from Lemma~3.5 with the specialization 
$x_4=\dots=x_n=0$ that $\mathcal A$ is not stable. 
If $a=1$, then  $\{x-w=h_0-h_2\}\in\mathcal A$ because $\mathcal A$ is $y$-closed, 
and $\mathcal A$ is not stable as in the case $a=0$.

Thus we may assume $a\ne0,1$.  
Then, from the relations
\small
\begin{align*}
  ax+(a-1)y-w&=a(x+y+z)-(y+az+w)&&(z\text{-closed}),\allowdisplaybreaks\\
  x+(1-a)z-w&=(x+y+z)-(y+az+w)&&(y\text{-closed}),\allowdisplaybreaks\\
  x+(1-a)z&=\bigl(x+(1-a)z-w\bigr)+w&&(w\text{-closed}),\allowdisplaybreaks\\
  ax+(a-1)y&=\bigl(ax+(a-1)y-w\bigr)+w&&(w\text{-closed}),\allowdisplaybreaks\\
  (a-1)y+(a^2-a)z-w&=(ax+(a-1)y-w)-a(x+(1-a)z)&&(x\text{-closed}),\allowdisplaybreaks\\
  ax+(a-a^2)z+w&=\bigl(ax+(a-1)y\bigr)-\bigl((a-1)y+(a^2-a)z-w\bigr)&&(y\text{-closed}),\allowdisplaybreaks\\
  2ax+(a-1)y+(a-a^2)z&=\bigl(ax+(a-a^2)z+w\bigr)+\bigl(ax+(a-1)y-w\bigr)&&(w\text{-closed}),\allowdisplaybreaks\\
  ax+(a-a^2)z+(1-a)w&=(1-a)(y+az+w)+\bigl(ax+(a-1)y\bigr)&&(y\text{-closed}),\allowdisplaybreaks\\
  (1-a)(y+az+2w)&=2\bigl(ax+(a-a^2)z+(1-a)w\bigr)\\
  &\quad{}-\bigl(2ax+(a-1)y+(a-a^2)z\bigr)&&(x\text{-closed}),
\end{align*}
\normalsize
we obtain $\{y+az+2w=h_3\}\in\mathcal A$.
Hence $\{y+az+2^n w=h_{n+2}\}\in\mathcal A$ for $n=0,1,2,\ldots$,
where each $h_{n+2}(x')$ is an affine linear polynomial in $x'$.
\end{proof}

\begin{proof}[Proof of Lemma~\ref{lem:Ar3n}] 
Let $m$ be the maximal integer such that there exist indices 
$i_\nu$ satisfying $1\le i_1<\cdots<i_m\le n$ and 
$\mathcal A_{x_{i_1}}\cap\cdots\cap\mathcal A_{x_{i_m}}\ne\emptyset$.
We will show that $m=2$.

Assume to the contrary that $m>2$. Without loss of generality, we may assume
$H:=\{x_1+\cdots+x_m=0\}\in\mathcal A$.

Suppose $m<n$. Choose $H_2\in\mathcal A_{x_2}\cap \mathcal A_{x_{m+1}}$.  
By the maximality of $m$, there exists $i$ with $1\le i\le m$ such that 
$H_2\in\mathcal A_{x_i}^c$. 
We may assume $i=1$, and write
\[
  H_2=\{x_2+a_3x_3+\cdots+a_mx_m+x_{m+1}+\cdots + a_nx_n=a_0\}.
\]
Suppose $\mathcal A_{x_1}^c\cap\cdots\cap\mathcal A_{x_m}^c\ne\emptyset$,
and let $H_3\in\mathcal A_{x_1}^c\cap\cdots\cap\mathcal A_{x_m}^c$. 
We may assume $H_3\in\mathcal A_{x_{m+1}}$.
By setting $x_4=\cdots=x_m=0$, 
Lemmas~\ref{lem:Arcut} and~\ref{lem:Ar4} imply that 
$\mathcal A$ is not stable, 
hence $\mathcal A_{x_1}^c\cap\cdots\cap\mathcal A_{x_m}^c=\emptyset$. 
\[
  (\overset{x}{x_1},\overset{y}{x_2},\overset{z}{x_3},\dots,x_m,\overset{w}{x_{m+1}},\dots,x_n)
\]

Thus, including the case $m=n$, we may assume
\[
 H_2=\{x_2+a_3x_3+\cdots+a_nx_n=a_0\}\in\mathcal A_{x_1}^c.
\]
Since $\#\mathcal L^{(2)}>1$, there exists 
\[
H_3=\{b_1x_1+b_2x_2+b_3x_3+\cdots+b_nx_n=b_0\}\in\mathcal A
\]
such that $H_3\not\supset H_1\cap H_2$.

If $H_3\cap H_1=\emptyset$, then by Lemma~\ref{lem:Ar3n} 
and Lemma~\ref{lem:Arcut} (with the specialization $x_4=\cdots=x_n=0$),
$\mathcal A$ is either reducible or not stable.  
Hence we must have $H_3\cap H_1\ne\emptyset$. 

Since $\mathcal A$ is $x_1$-closed, we may assume $b_1=0$.
Similarly, as $H_3\cap H_2\ne\emptyset$, 
the $x_2$-closedness of $\mathcal A$ implies that we may also assume $b_2=0$.
Because $\mathcal A$ is non-trivial, there exists some $b_i\ne0$ with $3\le i\le m$;
we may take $b_3\ne0$.
Then, applying Lemma~\ref{lem:Ar3} to the restriction $x_4=\cdots=x_n=0$, 
we conclude that $\mathcal A$ is not stable.
\end{proof}
\section{A related result}\label{sec:closed}
In this final section, we determine all vectors $v$ for which a stable hyperplane arrangement 
$\mathcal A$ is $v$-closed.

\begin{proposition}
Let $\mathcal A$ be the hyperplane arrangement described in Theorem~\ref{thm:stableAr},
and assume that $\#\mathcal L^{(2)}>1$.
Let $v$ be a non-zero vector in $\mathbb C^n$ such that $\mathcal A$ is $v$-closed.

If $r>0$ or $m>1$, then $v$ is a scalar multiple of one of the coordinate vectors $x_i$.

If $m=1$ and $\mathcal A=\mathcal A'\cup \mathcal A_0$, 
then $v$ is a scalar multiple of either one of the $x_i$ 
or of $(c,\dots,c)\in\mathbb C^n$ with $c\ne0$.

If $m=1$ and $\mathcal A=\mathcal A'$, 
then $v$ is a scalar multiple of one of the $x_i$ modulo $\mathbb C(1,\dots,1)$.
\end{proposition}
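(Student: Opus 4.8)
The plan is to determine, for the classified stable arrangement $\mathcal A$, exactly which directions $v$ keep $\mathcal A$ $v$-closed. By Lemma~\ref{lem:arH}(iii), $S\in\mathcal L$ is $v$-closed iff $\mathcal A_S\subset\mathcal A_v^c$; and $\mathcal A$ is $v$-closed iff $\langle v,S\rangle\in\mathcal L$ for every $S\in\mathcal L^{(2)}$. So the strategy is to write $v=(v_1,\dots,v_n)$ in the given coordinates and, for each codimension-two stratum $S=H\cap H'$, use Remark~\ref{rm:funvec}(ii) to compute $\langle v,S\rangle$ explicitly and demand that it lie in $\mathcal L$. The constraints on the $v_i$ coming from the various families of hyperplanes ($\{x_i=\omega x_j\}$, $\{x_i=\omega\alpha_j\}$, $\{x_i=0\}$) will, case by case, pin $v$ down to the asserted forms. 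I would organize the proof around the three cases of the statement, since the available hyperplane families differ in each.

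First I would dispose of the case $r>0$ or $m>1$. Here the arrangement contains either the affine hyperplanes $\{x_i=\omega\alpha_j\}$ (when $r>0$) or the homogeneous but ``twisted'' hyperplanes $\{x_i=\omega x_j\}$ with $\omega\ne1$ (when $m>1$), in addition to $\mathcal A_0=\{\{x_i=0\}\}$. The key observation is that the presence of the coordinate hyperplanes $\{x_i=0\}$ together with a nontrivial second family rigidifies $v$. Concretely, pick $S=\{x_i=0\}\cap\{x_j=0\}$ for $i\ne j$; applying Remark~\ref{rm:funvec}(ii) with $v$, the span $\langle v,S\rangle$ is a hyperplane whose equation mixes $x_i$ and $x_j$ through the ratio $v_i:v_j$, and for this to belong to $\mathcal L$ one needs $v_i=0$ unless the mixing coordinate direction already appears in the arrangement. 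Pushing this against the full list of hyperplanes forces all but one $v_i$ to vanish, giving $v\in\mathbb C x_i$. I would present the $r>0$ computation first (it is the cleanest, since affine hyperplanes cannot all be swept into one another) and then note the $m>1$ case runs the same way using that $\Omega\ne\{1\}$.

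For $m=1$, the arrangement loses its affine and twisted parts, so $v$ has more freedom. Here $\mathcal A'=\{\{x_i=x_j\}\}$ is the braid arrangement, and the codimension-two strata are of the two familiar shapes $\{x_i=x_j=x_k\}$ and $\{x_i=x_j\}\cap\{x_k=x_\ell\}$. Computing $\langle v,S\rangle$ for $S=\{x_i=x_j\}\cap\{x_i=x_k\}$ via Remark~\ref{rm:funvec}(ii) and requiring membership in $\mathcal L$, I expect the symmetric conditions to collapse to: either the $v_i$ are all equal (giving the direction $(c,\dots,c)$), or all the $v_i$ except possibly one agree after subtracting a common constant. When $\mathcal A=\mathcal A'\cup\mathcal A_0$ the extra hyperplanes $\{x_i=0\}$ break the translation symmetry along $(1,\dots,1)$ but still permit it as an honest closing direction (since $\langle(1,\dots,1),\{x_i=0\}\rangle$ stays compatible), yielding the two alternatives $v\in\mathbb C x_i$ or $v\in\mathbb C(1,\dots,1)$. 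When $\mathcal A=\mathcal A'$ alone, the line $\mathbb C(1,\dots,1)$ lies in every hyperplane, so it acts trivially, and the answer is naturally stated modulo $\mathbb C(1,\dots,1)$.

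The main obstacle I anticipate is the bookkeeping in the $m=1$, $\mathcal A=\mathcal A'$ subcase: with the braid arrangement the relations $\langle v,S\rangle\in\mathcal L$ become a system of symmetric equations on the $v_i$, and one must argue carefully that the only solutions are the coordinate-type vectors up to the redundant direction $(1,\dots,1)$, rather than some unexpected intermediate family. I would handle this by fixing three indices and showing that $v_i-v_j,\ v_j-v_k$ are forced to be proportional in a way that leaves at most one ``odd'' coordinate, then inducting on $n$ to propagate the conclusion across all indices. The affine cases ($r>0$) and the $m>1$ case are comparatively mechanical once the $\{x_i=0\}$ strata are exploited, so the real care is needed only in separating the two admissible directions in the purely homogeneous braid case.
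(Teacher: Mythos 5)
Your overall strategy coincides with the paper's: run through the classified arrangements case by case, compute $\langle v,S\rangle$ for codimension-two strata via Remark~\ref{rm:funvec}(ii), and force membership in $\mathcal L$; your expected final answers in all three cases are also correct. However, the computation you single out as the ``key observation'' cannot carry the case $r>0$ or $m>1$. For $S=\{x_i=0\}\cap\{x_j=0\}$ one gets $\langle v,S\rangle=\{v_jx_i=v_ix_j\}$, and this lies in $\mathcal A$ whenever $v_i=0$, $v_j=0$, \emph{or} $v_i/v_j\in\Omega$; since $\{x_i=\omega x_j\}$ belongs to $\mathcal A$ for every $\omega\in\Omega$, these strata never force any $v_i$ to vanish --- they only constrain ratios of nonzero entries to lie in $\Omega$. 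Concretely, for the type-$B_n$ arrangement ($m=2$, $r=0$, $n\ge3$) the vector $v=(1,1,\dots,1)$ passes every one of your featured tests, yet by the proposition it must be excluded. The exclusion needs strata mixing \emph{three} coordinates: the paper takes $v=(1,c_2,c_3,\dots)$ with $c_2\ne0$, $H_1=\{x_1=0\}$, $H_3=\{x_2=\omega x_3\}$, and computes $\langle v,H_1\cap H_3\rangle=\{x_2-\omega x_3=(c_2-\omega c_3)x_1\}$, which can lie in $\mathcal A$ only if $c_2=\omega c_3$; running over all $\omega\in\Omega$ with $m>1$ forces $c_2=c_3=0$, a contradiction. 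Similarly, for $r>0$ the decisive stratum is $\{x_1=0\}\cap\{x_2=\alpha_1\}$, whose sweep $\{x_2=c_2x_1+\alpha_1\}$ is affine and mixes two coordinates, hence never in $\mathcal A$; you allude to this (``affine hyperplanes cannot be swept into one another'') but never state the computation, and your claim that the $m>1$ case ``runs the same way'' is not right as stated: when $r=0$ there are no affine hyperplanes to exploit, and a genuinely different, three-coordinate stratum is required.

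A second, smaller soft spot is in the braid case $\mathcal A=\mathcal A'$. Your resolution plan works with triples of indices, but the triple strata $\{x_i=x_j=x_k\}$ yield exactly the condition ``at least two of $v_i,v_j,v_k$ coincide,'' and this alone does not pin $v$ down: for $n=4$ the vector $v=(a,a,b,b)$ with $a\ne b$ satisfies every triple condition, yet it sweeps $\{x_1=x_3\}\cap\{x_2=x_4\}$ to $\{x_1-x_3=x_2-x_4\}\notin\mathcal A'$. You do list the disjoint-pair strata $\{x_i=x_j\}\cap\{x_k=x_\ell\}$ among the shapes to consider, but your induction must actually use them (the paper does, via the stratum corresponding to $\{x_1=x_2\}\cap\{x_3=x_4\}$ together with $\{x_1=x_2=x_3\}$), otherwise vectors of this type survive. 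In short: right approach and right answers, but the computations that do the real work --- the mixed three-coordinate or affine strata in the rigid cases, and the disjoint-pair strata in the braid case --- are precisely the ones missing from your outline.
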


\begin{proof}
We may assume $v=(1,c_2,c_3,\ldots)$ with $c_2\ne0$. 

Suppose $H_1=\{x_1=0\}$ and $H_2=\{x_2=\alpha_1\}$ are in $\mathcal A$.
Then
\[
  \langle v,H_1\cap H_2\rangle=\{x_2=c_2x_1+\alpha_1\}\in\mathcal A.
\]
Hence $r=0$ and $n>2$. 
Since $H_3=\{x_2=\omega x_3\}\in\mathcal A$, we also have
\[
  \langle v,H_1\cap H_3\rangle
  =\{x_2-\omega x_3=(c_2-\omega c_3)x_1\}\in\mathcal A,
\]
which implies $c_2=\omega c_3$, and therefore $m=1$ and $c_2=c_3$.
By symmetry of the coordinates (cf.~$\sigma$ in \eqref{eq:Arcoord}), 
we conclude that $m=1$ and $v=(1,\dots,1)$.

Thus we may assume 
\[
  \mathcal A=\bigl\{\{x_i=x_j\}\mid 1\le i<j\le n\bigr\}
  \quad\text{with } n\ge4.
\]
We may further assume $v=(1,0,c_3,c_4,\dots)$.
Then
\begin{align*}
 \langle v,\{x_1-x_2=x_3-x_4 \}\rangle
   &=\{x_2-x_3=(c_3-c_4)(x_1-x_2)\}\in\mathcal A,\\
 \langle v,\{x_1-x_2=x_2-x_3\}\rangle
   &=\{x_2-x_3=-c_3(x_1-x_2)\}\in\mathcal A.
\end{align*}
Hence $c_3=c_4=0$ or $1$.
By symmetry, it follows that 
$v=(1,0,0,\dots,0)$ or $(1,0,1,\dots,1)$,
which proves the claim.
\end{proof}

\begin{remark} {\rm (i)} 
For $m=1$, the expression of the arrangement 
$\mathcal A=\mathcal A'\cup\mathcal A_0$ 
in Theorem~\ref{thm:stableAr} remains the same under the coordinate system 
\[
  (x_1-x_n,\ldots,x_{n-1}-x_n,-x_n)
\]
of $\mathbb C^n$.

{\rm (ii)}
The arrangement $\mathcal A=\{x_i\pm x_j=0\mid 1\le i<j\le 3\}$ is 
$(\epsilon_1,\epsilon_2,\epsilon_3)$-closed  
when $\epsilon_k\in\{1,-1\}$ for $k=1,2,3$.
\end{remark}


\begin{thebibliography}{1}

\bibitem{ComplexArrangements}
D.~Cohen, G.~Denham, M.~Falk, H.~Schenck, A.~Suciu, H.~Terao, and S.~Yuzvinsky.
\newblock {\em Complex {A}rrangements: {A}lgebra, {G}eometry, {T}opology}.
\newblock Draft of September 4, 2009.

\bibitem{DR}
M.~Dettweiler and S.~Reiter.
\newblock An algorithm of {K}atz and its applications to the inverse {G}alois
  problems.
\newblock {\em J. Symbolic Comput.}, 30:761--798, 2000.

\bibitem{Ha}
Y.~Haraoka.
\newblock Middle convolution for completely integrable systems with logarithmic
  singularities along hyperplane arrangements.
\newblock {\em Adv. Studies in Pure Math.}, 62:109--136, 2012.

\bibitem{katz1996rigid}
N.~M. Katz.
\newblock {\em Rigid local systems}.
\newblock Number 139 in Annals of Mathematics Studies. Princeton University
  Press, 1996.

\bibitem{Ost}
T.~Oshima.
\newblock Middle convolution of {KZ}-type equations and single-elimination
  tournaments.
\newblock {\em ArXiv.2504.09003}, 2025.

\bibitem{Ohp}
T.~Oshima.
\newblock Transformations of {P}faffian systems with logarithmic singularities
  along hyperplane arrangements, in preparation.
\newblock 2025.

\end{thebibliography}
\end{document}